\renewcommand{\forall}{\forallAlt}
\renewcommand{\exists}{\existsAlt}
\theoremstyle{plain}
\newtheorem{theorem}{Theorem}
\newtheorem{proposition}[theorem]{Proposition}
\newtheorem{corollary}[theorem]{Corollary}
\newtheorem{lemma}[theorem]{Lemma}
\theoremstyle{definition}
\newtheorem{definition}[theorem]{Definition}
\theoremstyle{remark}
\newtheorem*{remark*}{Remark}
\let\origqedsymbol\qedsymbol
\newcommand{\N}{\mathbb{N}}
\newcommand{\RCA}{\mathsf{RCA}_0}
\newcommand{\WKL}{\mathsf{WKL}_0}
\DeclareMathOperator{\id}{id}
\newcommand{\WPH}{\mathrm{WPH}}
\newcommand{\MDL}{\mathrm{MDL}}
\newcommand{\RPH}{\mathrm{RPH}}
\newcommand{\DL}{\mathrm{DL}}
\newcommand{\PH}{\mathrm{PH}}
\newcommand{\EFA}{\mathsf{EFA}}
\newcommand{\PRA}{\mathsf{PRA}}
\newcommand{\ls}{\Sigma}
\newcommand{\lp}{\Pi}
\newcommand{\ld}{\Delta}
\newcommand{\ra}{\rightarrow}
\newcommand{\lr}{\leftrightarrow}
\newcommand{\Ra}{\Rightarrow}
\DeclareMathOperator{\len}{lh}
\newcommand{\conc}{\mathbin{\text{\^{}}}}
\DeclarePairedDelimiter\abs{\lvert}{\rvert}
\DeclarePairedDelimiter\norm{\lvert}{\rvert_{\infty}}
\newcommand{\dotminus}{\mathbin{\text{\@dotminus}}}
\newcommand{\@dotminus}{%
	\ooalign{\hidewidth\raise1ex\hbox{.}\hidewidth\cr$\m@th-$\cr}%
}
\title{Dickson's Lemma and Weak Ramsey Theory}
\author{Yasuhiko Omata\thanks{Supported in part by JSPS Kakenhi 15H03634} \qquad Florian Pelupessy\thanks{Supported by the Japan Society for the Promotion of Science} \\ Mathematical Institute, Tohoku University}
\date{October 3, 2017} 
\begin{document}
	\maketitle
	
	\begin{abstract}
		We explore the connections between Dickson's lemma and weak Ramsey theory.
		We show that a weak version of the Paris--Harrington principle for pairs in $c$ colors and miniaturized Dickson's lemma for $c$-tuples are equivalent over $\mathsf{RCA}_0^{\ast}$.
		Furthermore, we look at a cascade of consequences for several variants of weak Ramsey's theorem.
	\end{abstract}
	
	\section{Introduction}
	Dickson's lemma, originally used in algebra, in particular for showing Hilbert's basis theorem~\cite{G}, is nowadays commonly used in termination proofs in computer science~\cite{Setal}.
	The weak Paris--Harrington principle for pairs was originally used as an easy intermediate version in showing lower bounds for the Paris--Harrington principle for pairs~\cite{E-M}.
	We provide simple constructions which show that the weak Paris--Harrington principle and miniaturized Dickson's lemma are equivalent over $\RCA^{\ast}$, the base theory weaker than $\RCA$.
	Additionally our construction provides an explicit formula for weak Ramsey numbers and tight upper bounds for the weak Paris--Harrington principle derived from those for Dickson's lemma.
	
	$\N$ denotes the set of nonnegative integers.
	We define some notations for colorings.
	For $a,R,c\in\N$, $[a,R]$ and $[a,R]^2$ denote the sets $\set{n\in\N:a\leq n\leq R}$ and $\set{(n,m)\in\N^2:a\leq n<m\leq R}$ respectively, and $c$ is identified with the set $[0,c-1]=\set{n\in\N:n<c}$.
	Given a map $C\colon [a,R]^2\ra c$ (called {\em coloring}), we say that a set $H\subseteq [a,R]$ is {\em $C$-homogeneous} if $C$ is constant on $[H]^2=\set{(n,m)\in H^2:n<m}$.
	Similarly, we say that a set $H=\set{h_0<h_1<\dotsb}\subseteq [a,R]$ is {\em $C$-weakly homogeneous} if $C(h_i,h_{i+1})=C(h_{i+1},h_{i+2})$ holds for all $h_i,h_{i+1},h_{i+2}\in H$.
	Weakly homogeneous sets are sometimes called {\em adjacent homogeneous} or {\em path homogeneous}.
	
	\begin{definition}[the weak Paris--Harrington principle]
		For $f\colon\N\ra\N$ and $c,a,R\in\N$, let $\WPH^f_c(a,R)$ be the statement that for every coloring $C\colon[a,R]^2\ra c$ there exists a $C$-weakly homogeneous set $H\subseteq[a,R]$ with $\abs*{H}>f(\min H)$.
		\emph{The weak Paris--Harrington principle for pairs and $c$ colors with parameter $f$}, denoted $\WPH^f_c$, states that for every $a$ there exists $R$ such that $\WPH^f_c(a,R)$ holds.
	\end{definition}
	
	We also define the notations for tuples.
	For $c$-tuples $\overline{m}=\left(m_0,\dotsc,m_{c-1}\right),\overline{n}=\left(n_0,\dotsc,n_{c-1}\right)\in\N^c$, write $\overline{m}\leq\overline{n}$ if and only if $\forall k<c\left(m_k\leq n_k\right)$, and $\norm*{\overline{m}}=\max_{k<c}\set{m_k}$.
	
	\begin{definition}[miniaturized Dickson's lemma]
		For $f\colon\N\ra\N$ and $c,a,D\in\N$, let $\MDL^f_c(a,D)$ be the statement that for every sequence $\overline{m}_0,\dotsc,\overline{m}_D\in\N^c$ with $\norm*{\overline{m}_i}<f(a+i)$ there exists $i<j\leq D$ such that $\overline{m}_i\leq\overline{m}_j$.
		\emph{Miniaturized Dickson's lemma for $f$ for $c$-tuples}, denoted $\MDL^f_c$, states that for every $a$ there exists $D$ such that $\MDL^f_c(a,D)$ holds.
	\end{definition}
	
	Our original intent was to provide direct proof of equivalence of Dickson's lemma ($\DL$) and $\forall c\forall f\WPH^f_c$ (\textup{Corollary~\ref{DL}}) and equivalence of $\WPH^{\id}_c$ and $\MDL^{\id}_c$ (\textup{Corollary~\ref{MDL}}).
	With some work, this could already be shown using proofs of equivalences of 
	\begin{itemize}[label={--}]
		\item
		$\forall c\PH^{\id}$ and $1\mathchar`-\mathrm{Con}(\mathsf{I}\ls_1)$ (\cite{K-S}),
		\item
		$\forall c\forall f\PH^f$ and $\mathrm{WO}(\omega^{\omega})$ (\cite{K-Y}),
		\item
		$\DL$ and $\mathrm{WO}(\omega^{\omega})$ (\cite{S}).
	\end{itemize}
	However, this method, from the previous literature, gives us the weak implication $\mathrm{WO}(\omega^{c+4})\ra\forall f\WPH^f_c$, while our work shows the level-by-level equivalence between $\forall f\WPH^f_c$ and $\DL_c$ (which is also equivalent to $\mathrm{WO}(\omega^c)$) in \textup{Corollary~\ref{DL}}.
	
	Our method, additionally, gives a similar sharpening of complexity bounds, stated in \textup{Corollaries~\ref{eq},~\ref{func}}, and the explicit expression in \textup{Theorem~\ref{wr}} for the weak Ramsey numbers.
	
	Finally, we look at the consequences, for the bounds of weak Ramsey numbers in higher dimensions (\textup{Section~\ref{secwr}}), and the phase transitions which follow from these bounds (\textup{Section~\ref{secpt}}).
	
	For examinations of weak Ramsey's theorem and its relation to termination we refer the reader to~\cite{S-Y}.
	
	\section{Base theory $\RCA^{\ast}$}
	Most of the results in this paper can be established within $\RCA^{\ast}$.
	
	\begin{definition}[$\RCA^{\ast}$]
		$\RCA^{\ast}$ is the subsystem of second order arithmetic, whose language additionaly contains binary function symbol $\mathtt{exp}$, consists of the following axioms:
		\begin{enumerate}
			\item
			basic axioms (see~\cite[\textup{Definition~I.2.4 (i)}]{sosoa});
			\item
			exponentiation axioms:
			\begin{align*}
			\mathop{\mathtt{exp}}(m,0)&=1,\\
			\mathop{\mathtt{exp}}(m,n+1)&=m\cdot\mathop{\mathtt{exp}}(m,n);
			\end{align*}
			\item
			induction scheme for all $\ls^0_0$ formulas which may contain $\mathop{\mathtt{exp}}$;
			\item
			comprehension scheme for all $\ld^0_1$ formulas which may contain $\mathop{\mathtt{exp}}$.
		\end{enumerate}
	\end{definition}
	
	$\mathop{\mathtt{exp}}(m,n)$ will be just denoted $m^n$.
	
	$\RCA^{\ast}$ is essentially $\EFA$ (elementary function arithmetic) plus $\ld^0_1$-comprehension.
	The relation between $\RCA^{\ast}$ and $\EFA$ is similar to the relation between $\RCA$ and $\PRA$ (primitive recursive arithmetic).
	$\RCA^{\ast}$ is $\lp^0_2$-conservative over $\EFA$, while $\RCA$ is $\lp^0_2$-conservative over $\PRA$.
	For more details about $\RCA^{\ast}$ and the conservativity results, see~\cite{S-S}.
	
	\begin{lemma}\label{bpr}
		$\RCA^{\ast}$ proves the closure under \emph{the bounded course of value primitive recursion}:
		For all functions $b\colon\N\times\N^k\ra\N$ and $g\colon\N\times\N\times\N^k\ra\N$, there exists the unique function $h\colon\N\times\N^k\ra\N$ satisfying
		\begin{equation*}
		h(n,\overline{m})=\min\set{b(n,\overline{m}),g(\left\langle h(0,\overline{m}),\dotsc,h(n-1,\overline{m})\right\rangle,n,\overline{m})}.
		\end{equation*}
	\end{lemma}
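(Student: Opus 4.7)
The plan is to exploit the a priori bound $h(n,\overline{m})\le b(n,\overline{m})$ built into the defining equation by the $\min$. Because every value of $h$ is bounded by $b$, finite initial segments of the tentative $h$ can be coded by numbers of elementarily bounded size, and so the graph of $h$ can be extracted by $\ld^0_1$-comprehension.

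Fix $b$ and $g$ as set parameters. Using $\mathop{\mathtt{exp}}$ together with a standard positional sequence coding, any tuple $(s_0,\dotsc,s_n)$ with $s_i\le b(i,\overline{m})$ has code bounded by some elementary function $\beta(n,\overline{m})$. I then introduce the $\ls^0_0$ predicate $\psi(n,\overline{m},s)$ asserting that $s$ codes a sequence of length $n+1$ such that, for every $i\le n$,
\begin{equation*}
(s)_i=\min\set{b(i,\overline{m}),g(\langle(s)_0,\dotsc,(s)_{i-1}\rangle,i,\overline{m})}.
\end{equation*}
Evaluation of $b$ and $g$, computation of $\min$, and the extraction of the initial segment code are all elementary, so $\psi$ is genuinely $\ls^0_0$. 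Set $\varphi(n,\overline{m},y)\equiv\exists s\le\beta(n,\overline{m})\bigl(\psi(n,\overline{m},s)\land(s)_n=y\bigr)$, which is again $\ls^0_0$.

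By $\ls^0_0$-induction on $n$---the base case producing the singleton $\langle\min\set{b(0,\overline{m}),g(\langle\rangle,0,\overline{m})}\rangle$, and the induction step extending the unique witness of length $n+1$ by the value forced by the recursion---I conclude that for every $n$ and $\overline{m}$ there is a unique $s$ with $\psi(n,\overline{m},s)$, and hence a unique $y$ with $\varphi(n,\overline{m},y)$. An application of $\ld^0_1$-comprehension to $\varphi$ then yields the graph of the desired $h$, and the defining recursion follows directly from $\psi$. The main obstacle is purely one of coding bookkeeping: one must pick a sequence coding for which length, projection, and restriction to initial segments are provably elementary in $\RCA^{\ast}$, so that $\psi$ is in fact $\ls^0_0$ and $\beta$ admits an explicit elementary bound. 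Given $\mathop{\mathtt{exp}}$, this is routine and the rest of the argument is mechanical.
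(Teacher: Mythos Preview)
Your outline matches the paper's proof in spirit: both define the graph of $h$ via a $\Sigma^0_0$ formula asserting the existence of a coded computation sequence below an explicit bound, verify existence and uniqueness by $\Sigma^0_0$-induction, and then apply $\Delta^0_1$-comprehension. The difference is that the paper does not treat the bound you call $\beta(n,\overline{m})$ as routine; producing it is in fact the crux of the argument in $\RCA^{\ast}$.

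The difficulty is that the natural bound on the code of $\langle s_0,\dots,s_n\rangle$ with $s_i\le b(i,\overline{m})$ involves $\max_{i\le n}b(i,\overline{m})$ (or a sum or product of the $b(i,\overline{m})$, depending on the coding). Showing that such a quantity exists as a total function of $n$ is itself an instance of bounded primitive recursion, and in $\RCA^{\ast}$ one cannot simply appeal to $\Sigma^0_1$-induction to get it. This is not mere bookkeeping: in models of $\RCA^{\ast}$ that fail $\Sigma^0_1$-induction there is no automatic guarantee that the image of a function on a bounded interval is bounded. The paper (following Simpson--Smith) resolves this by first constructing the \emph{argmax} function $j(n)$ recursively; since $j(n)\le n$, the computation sequence for $j$ is itself bounded by the term $n^n$, so the graph of $j$ is $\Sigma^0_0$-definable and its totality follows by $\Sigma^0_0$-induction. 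One then takes $b(j(n),\overline{m})^n$ as the bound for the $h$-computation. Your argument becomes correct once you insert this bootstrapping step; as written, the claim that $\beta$ ``admits an explicit elementary bound given $\mathtt{exp}$'' hides exactly the work the lemma is meant to do.
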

	
	\begin{proof}
		This proof is almost same as~\cite[\textup{Lemma~2.2}]{S-S}.
		
		Fix any $\overline{m}$.
		First, define the function $j(n)$ by the following primitive recursion:
		\begin{equation*}
		\begin{cases}
		j(0)&=0,\\
		j(n+1)&=
		\begin{cases}
		y+1&\text{if }b(n+1,\overline{m})\geq b(j(n),\overline{m}),\\
		j(n)&\text{otherwise}.
		\end{cases}
		\end{cases}
		\end{equation*}
		
		One can define the graph of $j$ by
		\begin{equation*}
		j(n)=y\lr\exists c<n^n\left(
		\begin{aligned}
		&(c)_0=0\land (c)_n=y\\
		&\land\forall i<n\left[\left((c)_{i+1}=n+1\land b(n+1)+1>b((c)_i)\right)\right.\\
		&\left.\lor\left((c)_{i+1}=(c)_i\land b(n+1)<b((c)_i)\right)\right]
		\end{aligned}
		\right)
		\end{equation*}
		using $\ld^0_1$-comprehension and $j$ is a function by $\ls^0_0$-induction.
		
		Since $b(j(n),\overline{m})=\max\set{b(n',\overline{m}):n'\leq n}$, the sequence
		\begin{equation*}
		\left\langle b(0,\overline{m}),b(1,\overline{m}),\dotsc,b(n-1,\overline{m})\right\rangle
		\end{equation*}
		is coded by some natural number less than $b(j(n),\overline{m})^n$.
		Then we can define $h$ in the same way by
		\begin{equation*}
		h(n,\overline{m})=y\lr\exists c<b(j(n))^n\left(
		\begin{aligned}
		&\len(c)=n\land (c)_n=y\\
		&\land\forall i\leq n\left[(c)_i=\min\set{b(i,\overline{m}),g(c\restriction i,i,\overline{m})}\right]
		\end{aligned}
		\right).
		\end{equation*}
		The uniqueness of $h$ is also proven by $\ld^0_1$-comprehension and $\ls^0_0$-induction.
	\end{proof}
	
	\textup{Lemma~\ref{bpr}} implies the following well-known result.
	
	\begin{corollary}
		$\RCA^{\ast}$ proves the existence of every elementary recursive function.
	\end{corollary}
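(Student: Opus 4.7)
The plan is to proceed by structural induction on the definition of an elementary recursive function, fixing one of the standard generating schemes: start from $0$, successor, projections, addition, multiplication, modified subtraction, and exponentiation, and close under composition and bounded primitive recursion. The goal is to show that each function in this class has a $\ld^0_1$-definable graph (with $\mathop{\mathtt{exp}}$) which $\RCA^{\ast}$ proves to be total and single-valued.

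For the base cases, exponentiation is part of the language of $\RCA^{\ast}$ and is governed by its defining axioms; the remaining initial functions have bounded $\ls^0_0$ graphs in the expanded language, so $\ld^0_1$-comprehension yields the graph as a set and $\ls^0_0$-induction supplies totality and single-valuedness. Closure under composition is handled analogously: given $f_1,\dotsc,f_k,g$ already obtained as total functions, the graph of $g(f_1(\overline{x}),\dotsc,f_k(\overline{x}))$ is $\ld^0_1$-definable by existentially quantifying over intermediate values, each bounded by an elementary function that is already available, so the quantifier is bounded and $\ld^0_1$-comprehension applies.

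The crucial step is closure under bounded primitive recursion. Suppose $h$ is defined by $h(0,\overline{m})=f(\overline{m})$ and $h(n+1,\overline{m})=g(h(n,\overline{m}),n,\overline{m})$, with $h(n,\overline{m})\leq b(n,\overline{m})$ for an already-constructed elementary $b$. Then $h$ fits the scheme of \textup{Lemma~\ref{bpr}}: define $g'(\sigma,n,\overline{m})$ to be $f(\overline{m})$ when $n=0$ and $g((\sigma)_{n-1},n-1,\overline{m})$ otherwise, extracting the preceding value from the coded sequence $\sigma$. Lemma~\ref{bpr} then delivers $h$ as a function provably total in $\RCA^{\ast}$, and bounded-quantifier minimisation against $b$ takes care of the upper bound.

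The main obstacle is organisational rather than technical: one has to fix a precise presentation of the elementary recursive functions (say, Grzegorczyk's class $\mathcal{E}^3$, or the Kalmár elementary functions) and verify at each instance of bounded primitive recursion that the bounding function itself has been constructed at an earlier stage of the induction, so that the hypotheses of Lemma~\ref{bpr} are genuinely available. The role of $\mathop{\mathtt{exp}}$ as a primitive function symbol in $\RCA^{\ast}$ is exactly to supply enough growth that all elementary bounds live within the language, closing the induction cleanly.
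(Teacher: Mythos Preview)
Your argument is correct and is precisely the intended one: the paper offers no proof of this corollary at all, merely recording it as a ``well-known'' consequence of \textup{Lemma~\ref{bpr}}, and your structural induction on a Kalm\'ar/Grzegorczyk presentation of the elementary functions is exactly how that implication is cashed out. The only point worth tightening is the last sentence of your recursion step: in \textup{Lemma~\ref{bpr}} the bound $b$ is enforced by taking a minimum, so when you instantiate it for an elementary $h$ that is \emph{already} known to satisfy $h(n,\overline{m})\leq b(n,\overline{m})$, you should remark (by a $\ls^0_0$-induction on $n$) that the truncated function produced by the lemma agrees with the intended $h$; your phrase ``bounded-quantifier minimisation against $b$'' gestures at this but does not quite say it.
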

	
	\section{Constructions}
	We provide the notions of bad colorings/sequences.
	They are counterexamples to $\WPH^f_c(a,R)$ and $\MDL^f_c(a,R)$ respectively.
	
	\begin{definition}[bad coloring]
		Given $a,c,R\in\N$ and $f\colon\N\ra\N$, a coloring $C\colon [a,R]^2\ra c$ is \textit{$f$-bad} if every $C$-weakly homogeneous set $H\subseteq [a,R]$ has size $\leq f(\min H)$.
	\end{definition}
	
	\begin{definition}[bad sequence]
		Let $a,c,D\in\N$ and $f\colon\N\ra\N$ be given.
		We say that a sequence $\overline{m}_0,\dotsc,\overline{m}_D\in\N^c$ is \emph{bad} if for all $i<j\leq D$, $\overline{m}_i\nleq\overline{m}_j$ holds.
		Also, we say that $\overline{m}_0,\dotsc,\overline{m}_D$ is \emph{$(a,f)$-bounded} if $\norm{\overline{m}_i}<f(a+i)$ for all $i\leq D$.
		We call $(a,f)$-bounded bad sequences \emph{$(a,f)$-bad}.
	\end{definition}
	
	Then $\WPH^f_c$ states that for every $a$ there exists $R$ such that there is no $f$-bad coloring $C\colon[a,R]^2\ra c$, and $\MDL^f_c$ states that for every $a$ there exists $D$ such that there is no $(a,f)$-bad sequence $\overline{m}_0,\dotsc,\overline{m}_D\in\N^c$.
	
	\begin{lemma}[$\RCA^{\ast}$]\label{lem}
		For every $f\colon\N\ra\N$ and $c,a,R,D\in\N$, the following hold:
		\upshape
		\begin{enumerate}[label=(\roman*)]
			\item\label{F.}
			Existence of an $f$-bad coloring $C\colon[a,R]^2\ra c$ implies existence of an $(a,f)$-bad sequence $\overline{m}_0,\dotsc,\overline{m}_{R-a}\in\N^c$.
			\item\label{O.}
			Existence of an $(a,f)$-bad sequence $\overline{m}_0,\dotsc,\overline{m}_D\in\N^c$ implies existence of an $f$-bad coloring $C\colon[a,a+D]^2\ra c$.
		\end{enumerate}
		The same holds for bad colorings $C\colon[a,\infty]^2\ra c$ and infinite $(a,f)$-bad sequences.
	\end{lemma}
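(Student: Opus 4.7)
The lemma has two parts, both by explicit constructions: roughly, chains of color-$k$ edges in a coloring correspond to strictly decreasing sequences in the $k$-th coordinate of a sequence of tuples.

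For (i), given an $f$-bad $C \colon [a,R]^2 \to c$, I would define $\overline{m}_i \in \N^c$ (for $0 \leq i \leq R-a$) by letting $(\overline{m}_i)_k$ be the largest $\ell$ for which there is a chain $a+i = h_0 < h_1 < \dotsb < h_\ell$ in $[a+i,R]$ with $C(h_s, h_{s+1}) = k$ for every $s < \ell$. Such a chain exhibits $\{h_0, \dotsc, h_\ell\}$ as a $C$-weakly homogeneous set, so $f$-badness yields $\ell + 1 \leq f(a+i)$, hence $(\overline{m}_i)_k < f(a+i)$. For badness of the sequence, I would assume $i < j$ and $\overline{m}_i \leq \overline{m}_j$, set $k = C(a+i, a+j)$, and prepend $a+i$ to an extremal color-$k$ chain starting at $a+j$; this produces $(\overline{m}_i)_k \geq (\overline{m}_j)_k + 1$, contradicting the coordinate-wise inequality.

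For (ii), given an $(a,f)$-bad sequence $\overline{m}_0, \dotsc, \overline{m}_D$, I would set $C(a+i, a+j)$ for $i<j$ to be the least $k < c$ with $(\overline{m}_i)_k > (\overline{m}_j)_k$, which exists because $\overline{m}_i \nleq \overline{m}_j$. Any $C$-weakly homogeneous $H = \{a+i_0 < \dotsb < a+i_{s-1}\}$ with adjacent color $k$ then yields $(\overline{m}_{i_0})_k > \dotsb > (\overline{m}_{i_{s-1}})_k \geq 0$, giving $s - 1 \leq \norm{\overline{m}_{i_0}} < f(a+i_0) = f(\min H)$, so $\abs{H} \leq f(\min H)$; the singleton case $s = 1$ uses $f(a+i) \geq 1$, which follows from $\norm{\overline{m}_i} < f(a+i)$.

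The infinite version runs with the same two constructions on $[a,\infty)$ and on an infinite $(a,f)$-bad sequence; the only new point is that $(\overline{m}_i)_k$ in (i) remains finite, because an infinite color-$k$ chain from $a+i$ would produce an infinite $C$-weakly homogeneous set violating $f$-badness. The main item to check for $\RCA^{\ast}$ formalization is that both constructions are elementary: the chain codes witnessing $(\overline{m}_i)_k$ are bounded by an exponential in $R$ and $f(a+i)$, and the coloring in (ii) is a bounded minimization over $c$, so \textup{Lemma~\ref{bpr}} and $\ld^0_1$-comprehension suffice.
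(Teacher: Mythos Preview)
Your argument for (ii) is essentially identical to the paper's. For (i) you take a genuinely different construction: you let $(\overline{m}_i)_k$ be the maximal length of a \emph{forward} color-$k$ chain starting at $a+i$, whereas the paper defines $h(i,k)$ by a \emph{backward} bounded course-of-values recursion,
\[
h(i,k)=\min\bigl(\{f(a+i)\}\cup\{h(j,k)\dotminus 1: j<i,\ C(a+j,a+i)=k\}\bigr),
\]
and sets $\overline{m}_i=(h(i,0)-1,\dotsc,h(i,c-1)-1)$. Both constructions secure the key property that $C(a+j,a+i)=k$ forces a strict drop in coordinate $k$, so both give an $(a,f)$-bad sequence; yours is arguably more transparent, while the paper's is tailored to invoke \textup{Lemma~\ref{bpr}} directly.

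There is, however, a gap in your treatment of the infinite case of (i). With $C\colon[a,\infty)^2\to c$, the chain witnesses for $(\overline{m}_i)_k$ have length below $f(a+i)$ but their \emph{entries} may lie anywhere in $[a+i,\infty)$, so ``there is a color-$k$ chain of length $\ell$ from $a+i$'' is an unbounded $\Sigma^0_1$ statement and its negation is $\Pi^0_1$; the function $i\mapsto(\overline{m}_i)_k$ is therefore not visibly $\Delta^0_1$, and $\RCA^{\ast}$ does not obviously produce the sequence. Your own formalization remark appeals to a bound ``exponential in $R$ and $f(a+i)$,'' but there is no $R$ in the infinite setting. The paper's backward recursion sidesteps this: computing $h(i,k)$ uses only the finitely many values $h(j,k)$ for $j<i$ and the colors $C(a+j,a+i)$, so \textup{Lemma~\ref{bpr}} applies uniformly to the finite and infinite cases. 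You can close the gap by switching to a backward-looking definition, or by noting that only the finite case is actually used downstream.
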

	\begin{proof}[Proof of~\ref{F.}]
		Let $C\colon[a,R]^2\ra c$ be a given $f$-bad coloring.
		The idea of construction is to construct a sequence of $c$-tuples with the following properties:
		\begin{enumerate}
			\item
			If $C(a+j,a+i)=k$, then $\big(\overline{m}_j\big)_k>\left(\overline{m}_i\right)_k$.
			\item
			All the coordinates of the $\overline{m}$'s are the maximum possible such that 1 holds and $\abs{\overline{m}_i}<f(a+i)$.
		\end{enumerate}
		
		We apply \textup{Lemma~\ref{bpr}} to define $h\colon\N^2\ra\N$ using bounded course of value primitive recursion:
		\begin{equation*}
		h(i,k)=\min\left(\set{f(a+i)}\cup\Set{h(j,k)\dotminus 1:j<i\leq R-a,C(a+j,a+i)=k}\right),
		\end{equation*}
		where $x\dotminus 1=x-1$ if $x>0$, $0$ otherwise.
		
		We show that $h(i,k)\geq 1$ for all $(i,k)\in[0,R-a]\times c$.
		For each $k$, we can show by $\ls^0_0$-induction the following:
		For all $i$ there exists $l\leq i$ and $i=i^{(0)},\dotsc,i^{(l)}\in\N$ such that
		\begin{gather*}
		i^{(1)}\!<\!i^{(0)}\text{ \& }h(i^{(1)},k)=h(i,k)+1\text{ \& }C(a+i^{(1)},a+i^{(0)})=k,\\
		i^{(2)}\!<\!i^{(1)}\text{ \& }h(i^{(2)},k)=h(i,k)+2\text{ \& }C(a+i^{(2)},a+i^{(1)})=k,\\
		\vdots\\
		\begin{multlined}
		i^{(l)}\!<\!i^{(l-1)}\text{ \& }h(i^{(l)},k)=h(i,k)+l\text{ \& }C(a+i^{(l)},a+i^{(l-1)})=k,\\
		\text{ \& }h(i^{(l)},k)=f(a+i^{(l)}).
		\end{multlined}
		\end{gather*}
		Then
		\begin{equation*}
		H=\Set{a+i^{(l)}<a+i^{(l-1)}<\dotsb<a+i^{(0)}}
		\end{equation*}
		is a $C$-weakly homogeneous set of size $l+1$.
		Since $C$ is $f$-bad we have $l+1\leq f(\min H)=f(a+i^{(l)})=h(i,k)+l$ thus $h(i,k)\geq 1$.
		
		Hence for all $j<i\leq R-a$ with $C(a+j,a+i)=k$, by the definition of $h$ that $h(i,k)\leq h(j,k)\dotminus 1=h(j,k)-1$, we have $h(j,k)>h(i,k)$.
		Moreover $h(i,k)\leq f(a+i)$ for all $i\leq R-a$.
		
		Define $\overline{m}_i=\left(h(i,0)-1,\dotsc,h(i,c-i)-1\right)\in\N^c$ for each $i\leq R-a$.
		Then the sequence $\overline{m}_0,\dotsc,\overline{m}_{R-a}$ is $(a,f)$-bad by the properties of $h$ above.
		This completes the proof of~\ref{F.}.
		\renewcommand{\qedsymbol}{\relax}
	\end{proof}
	\begin{proof}[Proof of~\ref{O.}]
		Let $\overline{m}_0,\dotsc,\overline{m}_D$ be a given $(a,f)$-bad sequence.
		Since this is bad, for every $i<j\leq D$ there is a $k\in\N$ such that $\left(\overline{m}_i\right)_k>\big(\overline{m}_j\big)_k$.
		We choose the smallest such $k=k(i,j)$ for each $i<j\leq D$, and define a coloring $C\colon[a,a+D]^2\ra c$ by $C(a+i,a+j)=k(i,j)$.
		To show that $C$ is an $f$-bad coloring, suppose $H=\Set{a+h_0<a+h_1<\dotsb}\subseteq[a,a+D]$ is a $C$-weakly homogeneous set.
		Then $\left(\overline{m}_{h_0}\right)_k>\left(\overline{m}_{h_1}\right)_k>\dotsb$ for some $k<c$.
		Since these values are all nonnegative, maximum possible size of $H$ is $\left(\overline{m}_{h_0}\right)_k+1\leq\norm*{\overline{m}_{h_0}}+1\leq f(a+h_0)=f(\min H)$.
		\renewcommand{\qedsymbol}{\origqedsymbol}
	\end{proof}
	
	\section{Complexities}
	We define functions $R^f_c$ and $D^f_c$ which witness $\WPH^f_c(a,R^f_c(a))$, $\MDL^f_c(a,D^f_c(a))$.
	
	\begin{definition}[$R_c^f$ and $D_c^f$]
		For $c$ and $f$, take
		\begin{align*}
		R_c^f(a)&=\text{the smallest $R$ such that $\WPH^f_c(a,R)$ holds,}\\
		D_c^f(a)&=\text{the smallest $D$ such that $\MDL^f_c(a,D)$ holds.}
		\end{align*}
	\end{definition}
	
	By \textup{Lemma~\ref{lem}}, we immediately have the following:
	\begin{corollary}[$\RCA^{\ast}$]\label{eq}
		$R^f_c(a)=D^f_c(a)+a$ holds for every $a$, $c$, and $f$.
	\end{corollary}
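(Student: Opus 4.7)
The plan is to unfold the definitions and apply Lemma~\ref{lem} in both directions. Since $\WPH^f_c(a,R)$ asserts the nonexistence of an $f$-bad coloring on $[a,R]^2$ and $\MDL^f_c(a,D)$ asserts the nonexistence of an $(a,f)$-bad sequence of length $D+1$, Lemma~\ref{lem} yields, for every $R\geq a$, the biconditional
\begin{equation*}
\WPH^f_c(a,R)\lr\MDL^f_c(a,R-a).
\end{equation*}
The forward implication is the contrapositive of Lemma~\ref{lem}\ref{F.}: any $f$-bad coloring on $[a,R]^2$ would produce an $(a,f)$-bad sequence $\overline{m}_0,\dotsc,\overline{m}_{R-a}$. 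The reverse implication is the contrapositive of Lemma~\ref{lem}\ref{O.}: any $(a,f)$-bad sequence of length $R-a+1$ would produce an $f$-bad coloring on $[a,a+(R-a)]^2=[a,R]^2$.

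Once this biconditional is in hand, the equality of the two witness functions is a direct calculation. Instantiating $R=D^f_c(a)+a$ in the equivalence gives $\WPH^f_c(a,D^f_c(a)+a)$, so by minimality of $R^f_c(a)$ we have $R^f_c(a)\leq D^f_c(a)+a$. Conversely, instantiating $R=R^f_c(a)$ gives $\MDL^f_c(a,R^f_c(a)-a)$, so by minimality of $D^f_c(a)$ we have $D^f_c(a)\leq R^f_c(a)-a$. Combining the two inequalities yields the desired equality.

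The entire argument is essentially bookkeeping built on top of Lemma~\ref{lem}, and therefore goes through in $\RCA^{\ast}$ with no additional machinery; in particular, defining $R^f_c(a)$ and $D^f_c(a)$ by bounded minimisation is available since Lemma~\ref{lem} uniformly translates witnesses between the two settings. I anticipate no substantive obstacle here: the only point to watch carefully is the one-off shift between the length indexing of a sequence (a sequence $\overline{m}_0,\dotsc,\overline{m}_D$ has $D+1$ entries) and the size of the interval $[a,R]$ (which has $R-a+1$ elements), which is precisely what produces the $+a$ correction in the final formula.
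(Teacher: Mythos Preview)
Your proposal is correct and is exactly the argument the paper intends: the paper's own proof is the single line ``By \textup{Lemma~\ref{lem}}, we immediately have the following,'' and you have simply unpacked that sentence. One small slip: you have interchanged the roles of \ref{F.} and \ref{O.} --- the contrapositive of \ref{F.} (bad coloring $\Rightarrow$ bad sequence) yields $\MDL^f_c(a,R-a)\Rightarrow\WPH^f_c(a,R)$, i.e.\ the \emph{reverse} direction of your biconditional, and likewise \ref{O.} gives the forward direction; the remainder of the argument is unaffected.
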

	\begin{remark*}
		This equation depends on the formulations of $\WPH^f_c$ and $\MDL^f_c$.
		One can define $\WPH^f_c(a,R)$ as ``$\forall C\colon[0,R]^2\ra c\exists H\subseteq[a,R]\text{: $C$-weakly homogeneous with }\abs*{H}>f(a+\min H)$'' and one will have $R^f_c(a)=D^f_c(a)$.
	\end{remark*}
	
	The values of $D^f_c(a)$ for $c=0,1$ are easily computed, namely $D^f_0(a)=\min\set{1,f(a)}$ and $D^f_1(a)=f(a)$ for all $a$.
	Assuming that $f$ is monotone (i.e., nondecreasing), one can also show that $D^f_{c+1}(a)\geq\left(D^f_c\right)^{(f(a))}(a)$ for each $c$.
	For $f=\id$, let us write $D^{\id}_c$ just $D_c$.
	Then, $D_2(a)\geq a^2$ and since $D_{c+1}(a)\geq {D_c}^{(a)}(a)$ holds for all $c$ and $a$, the function $(c,a)\mapsto D_c(a)$ grows as fast as the Ackermann function and is not primitive recursive.
	
	Moreover in~\cite{Setal}, Schnoebelen et al.\ give bounds for $D^f_c$.
	Together with \textup{Corollary~\ref{eq}}, their results also hold for $R^f_c$:
	\begin{corollary}\label{func}
		For ordinal $\gamma$, let $F_{\gamma}$ be the $\gamma$-th fast growing function (defined in~\cite{L-W}), and define $\mathfrak{F}_{\gamma}$ to be the smallest class which contains constants, sum, projections, and $F_{\gamma}$, and is closed under the operations of composition and bounded primitive recursion.
		Then the following hold:
		\begin{enumerate}
			\item
			Let $\gamma\geq 1$ be an ordinal.
			If $f\colon\N\ra\N\in\mathfrak{F}_{\gamma}$ is monotone with $f(x)\geq\max\set{1,x}$ for all $x$, then for each $c\geq 1$ there exists function $M_c\in\mathfrak{F}_{\gamma+c-1}$ such that $R^f_c(a)\leq M_c(a)$ holds for all $a$.
			\item
			For every ordinal $\gamma$ and $c\geq 1$, $R^{F_{\gamma}}_c(a)\geq F_{\gamma+c-1}(a)$ holds for all $a$.
		\end{enumerate}
		
	\end{corollary}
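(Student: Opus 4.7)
The plan is to derive both bounds directly from the corresponding bounds on $D^f_c$ proved by Schnoebelen et al.\ in~\cite{Setal}, and transport them to $R^f_c$ through the identity $R^f_c(a) = D^f_c(a) + a$ supplied by \textup{Corollary~\ref{eq}}.

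For part~(1), I would start from the upper bound of~\cite{Setal}, which, for a monotone $f \in \mathfrak{F}_\gamma$ with $f(x) \geq \max\set{1,x}$ and each $c \geq 1$, produces a function $N_c \in \mathfrak{F}_{\gamma+c-1}$ with $D^f_c(a) \leq N_c(a)$ for all $a$. Then \textup{Corollary~\ref{eq}} gives $R^f_c(a) = D^f_c(a) + a \leq N_c(a) + a$, so the natural candidate is $M_c(a) := N_c(a) + a$. To see that $M_c \in \mathfrak{F}_{\gamma+c-1}$ one just notes that the class contains constants, projections and the sum function and is closed under composition, so adding a projection to a member of the class keeps us inside it.

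For part~(2), I would invoke the matching lower bound of~\cite{Setal}, namely $D^{F_\gamma}_c(a) \geq F_{\gamma+c-1}(a)$, and combine it with $R^{F_\gamma}_c(a) = D^{F_\gamma}_c(a) + a \geq D^{F_\gamma}_c(a)$ from \textup{Corollary~\ref{eq}} to conclude immediately.

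The main obstacle is not really mathematical but bookkeeping: one has to verify that the ``controlled bad sequence length function'' used in~\cite{Setal} aligns with our $D^f_c$ (same control on $\norm{\overline{m}_i}$ by $f(a+i)$, same order $\leq$ on $\N^c$) and that their indexing of $\mathfrak{F}_\gamma$ and of the fast-growing hierarchy $F_\gamma$ matches the convention of~\cite{L-W} cited here. Once these definitional matches are confirmed, both inequalities reduce to a one-line appeal to \textup{Corollary~\ref{eq}} with no further computation required.
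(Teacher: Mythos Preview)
Your proposal is correct and matches the paper's approach exactly: the paper does not give a separate proof but simply states that the bounds of~\cite{Setal} on $D^f_c$ transfer to $R^f_c$ via \textup{Corollary~\ref{eq}}. Your additional remarks---that $M_c(a)=N_c(a)+a$ remains in $\mathfrak{F}_{\gamma+c-1}$ and that the definitions of $D^f_c$ must be checked to agree with those in~\cite{Setal}---are the only details one would actually need to verify, and the paper leaves them implicit.
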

	
	We can also apply \textup{Corollary~\ref{eq}} to determine the weak Ramsey numbers.
	\begin{definition}[(weak) Ramsey numbers]
		Define
		\begin{align*}
		r_c(a)&=\text{the smallest $R$ such that for every } C\colon[0,R]^2\ra c\\
		&\mathrel{\phantom{=}}\text{there exists a $C$-homogeneous set $H$ with $\abs*{H}=a+1$,}\\
		wr_c(a)&=\text{the smallest $R$ such that for every } C\colon[0,R]^2\ra c\\
		&\mathrel{\phantom{=}}\text{there exists a $C$-weakly homogeneous set $H$ with $\abs*{H}=a+1$.}
		\end{align*}
		Clearly $wr_c(a)\leq r_c(a)$.
		These are the smallest witnesses for \emph{finite Ramsey's theorem for pairs} and \emph{weak finite Ramsey's theorem for pairs} respectively.
	\end{definition}
	\begin{theorem}[$\RCA^{\ast}$]\label{wr}
		$wr_c(a)=a^c$ (unless $a=c=0$).
	\end{theorem}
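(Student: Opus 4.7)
The plan is to instantiate the machinery of the previous section with $f$ the constant function $a$. Then the condition $\abs*{H} > f(\min H)$ becomes $\abs*{H} \geq a+1$, so $\WPH^f_c(0, R)$ is exactly the statement asserting existence of a weakly homogeneous set of size $a+1$. Hence $wr_c(a) = R^f_c(0)$, and \textup{Corollary~\ref{eq}} (with the $+a$ shift vanishing because $a = 0$ in the translation) reduces this further to $wr_c(a) = D^f_c(0)$, i.e., the length of the longest bad sequence of $c$-tuples drawn from $\{0, \ldots, a-1\}^c$ under the Dickson order.

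For the upper bound, any bad sequence must be repetition-free: $\overline{m}_i = \overline{m}_j$ with $i < j$ would give $\overline{m}_i \leq \overline{m}_j$, contradicting badness. So its length is at most $\abs*{\{0, \ldots, a-1\}^c} = a^c$. For the lower bound, I would enumerate all $a^c$ tuples in any order that linearly extends the reverse Dickson order---concretely, in decreasing order of the coordinate sum $\sum_{k<c} m_k$, with ties broken arbitrarily. A strict Dickson inequality $\overline{m}_i < \overline{m}_j$ forces $\sum_k (\overline{m}_i)_k < \sum_k (\overline{m}_j)_k$, so $\overline{m}_j$ must precede $\overline{m}_i$ in the enumeration, contradicting $i < j$; this yields a bad sequence of length exactly $a^c$.

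The stipulation $(a, c) \neq (0, 0)$ only rules out the degenerate convention for $0^0$. Otherwise the edge cases are immediate: when $a = 0$ and $c \geq 1$, the ambient set is empty and both sides vanish; when $c = 0$ and $a \geq 1$, the unique $0$-tuple is trivially $\leq$ itself, so the longest bad sequence has length $1 = a^0$. No step should present serious difficulty; the only real care is bookkeeping---verifying that $D^f_c(0)$ equals the longest bad sequence length exactly (rather than off by one) and that the reduction through \textup{Corollary~\ref{eq}} correctly matches the size bound $\abs*{H} \geq a+1$ against the constant $f$.
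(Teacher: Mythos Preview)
Your proposal is correct and follows essentially the same approach as the paper: reduce to $D^{f_a}_c(0)$ via \textup{Corollary~\ref{eq}} with the constant function $f_a\equiv a$, bound above by noting a bad sequence in $\{0,\dotsc,a-1\}^c$ is repetition-free (the paper phrases this as pigeonhole), and bound below by exhibiting a bad enumeration of all $a^c$ tuples. The only cosmetic difference is your choice of linear extension of the reverse Dickson order (decreasing coordinate sum) versus the paper's decreasing lexicographic order; both work for the same reason.
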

	\begin{proof}
		For each $a$, let $f_a$ be the constant function $f_a(x)=a$.
		We have $wr_c(a)=R^{f_a}_c(0)$ by definition and $R^{f_a}_c(0)=D^{f_a}_c(0)$ by \textup{Corollary~\ref{eq}}.
		Moreover $D^{f_a}_c(0)=a^c$, since $D^{f_a}_c(0)\leq a^c$ by the finite pigeonhole principle, and $D^{f_a}_c(0)>a^c-1$ by existence of the bad sequence enumerating $c$-tuples in $\set{0,\dotsc,a-1}^c$ in decreasing lexicographical order.
	\end{proof}
	
	\section{Weak Ramsey numbers for higher dimensions}\label{secwr}
	In this section we extend the notions for colorings.
	To higher dimensions, for $d\in\N$, the set of $d$-elements sets in $[a,R]$ is $[a,R]^d=\set{(x_0,\dotsc,x_{d-1})\in\N^d:a\leq x_0<\dotsb <x_{d-1}\leq R}$.
	Given a coloring $C\colon[a,R]^d\ra c$, $H=\set{h_0<h_1<\dotsb}\subseteq[a,R]$ is called \emph{$C$-weakly homogeneous} if $C(h_i,\dotsc,h_{i+d-1})=C(h_{i+1},\dotsb,h_{i+d})$ holds for all $h_i,h_{i+1},\dotsc,h_{i+d}$ in $H$.
	
	Let $wr^d_c(m)$ be the smallest $R$ such that for every coloring $C\colon[0,R]^d\ra c$ there exists a $C$-weakly homogeneous set of size $m+1$.
	So $wr^2_c(m)=m^c$.
	In this section we will give bounds for $wr^d_c(m)$ for higher dimensions, which involve towers of exponentiation of height $(d-2)$.
	Roughly speaking, an increase in the dimension by one results in an extra application of the exponential in the bounds.
	All the arguments and results in this section are made in $\RCA^{\ast}$.
	We start with the upper bounds:
	\begin{lemma}[$\RCA^{\ast}$]
		For $d\geq 1$, $wr^d_c(m)\leq M$ implies $wr^{d+1}_c(m)\leq 2^{M^{d+1}}$.
	\end{lemma}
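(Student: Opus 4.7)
The plan is to mimic the classical Erd\H{o}s--Rado step-up, adapted to the weakly homogeneous setting. Given a coloring $C \colon [0,R]^{d+1} \to c$ with $R = 2^{M^{d+1}}$, the first step is to extract a sequence $a_0 < a_1 < \dotsb < a_M$ inside $[0,R]$ on which $C$ is ``canonical'' along its last argument: for every $d$-tuple of indices $i_0 < \dotsb < i_{d-1} \leq M$, the value $C(a_{i_0}, \dotsc, a_{i_{d-1}}, a_j)$ is independent of the choice of $j > i_{d-1}$. Once this is in hand I define the derived $d$-dimensional coloring $C^{\ast} \colon [0,M]^d \to c$ by setting $C^{\ast}(i_0, \dotsc, i_{d-1})$ equal to this common value, apply the hypothesis $wr^d_c(m) \leq M$ to produce a $C^{\ast}$-weakly homogeneous $H \subseteq [0, M]$ of size $m+1$, and pull it back to $\set{a_i : i \in H}$. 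A direct unwinding shows the image is $C$-weakly homogeneous, because the $C$-value of any consecutive $(d+1)$-tuple in the image is determined by the canonical value attached to its first $d$ coordinates, and the $C^{\ast}$-condition on $H$ is precisely that these canonical values agree on consecutive $d$-tuples.

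The extraction itself is an iterated pigeonhole refinement. Set $S_0 = [0, R]$; at each stage $k$ I put $a_k = \min S_k$ and split $S_k \setminus \set{a_k}$ using the type map sending $y > a_k$ to the tuple $\bigl(C(a_{i_0}, \dotsc, a_{i_{d-2}}, a_k, y)\bigr)$ indexed by the $(d-1)$-tuples $i_0 < \dotsb < i_{d-2} < k$, and let $S_{k+1}$ be a largest class. The invariant, provable by $\ls^0_0$-induction, is that $y \mapsto C(a_{i_0}, \dotsc, a_{i_{d-1}}, y)$ is constant on $S_{k+1}$ for every $d$-tuple of indices $\leq k$. Since at stage $k$ there are at most $c^{\binom{k}{d-1}}$ classes, the hockey-stick identity gives $\abs{S_M} \geq (R+1)/c^{\binom{M}{d}} - M$, which the choice $R = 2^{M^{d+1}}$ makes positive.

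Everything here is elementarily definable from $C$, so the construction lives in $\RCA^{\ast}$ via \textup{Lemma~\ref{bpr}}. The point that I expect to take the most care is the arithmetic comparison $c^{\binom{M}{d}}(M+1) \leq 2^{M^{d+1}}$; after taking logarithms this reduces, up to lower-order terms, to $\binom{M}{d}\log_2 c \leq M^{d+1}$, which holds comfortably once $M \geq \log_2 c$. The handful of pathologically small cases (notably $M = 0$, forcing $m = 0$ so that $wr^{d+1}_c(m) = 0$ is trivial, and analogous small-$M$ cases where the target weakly homogeneous set is so small that the condition is vacuous) must be verified by hand.
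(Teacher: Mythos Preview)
Your proposal is correct and follows the same Erd\H{o}s--Rado step-up as the paper: extract a set on which $C$ is canonical in its last coordinate (the paper calls this $C$-$\min_d$-homogeneous), reduce to a $d$-dimensional coloring via the common value, and apply the hypothesis $wr^d_c(m)\leq M$. The only cosmetic difference is that you obtain the canonical set by iterated pigeonhole refinement, whereas the paper places every element of $[0,R]$ into a tree along its longest compatible branch and bounds the tree size by contradiction; the resulting arithmetic and the handling of degenerate small parameters (the paper simply asserts ``using $2\leq c\leq M$'') are essentially the same.
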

	\begin{proof}
		This is true for $c=0,1$.
		We assume $wr^d_c(m)\leq M$ for $c\geq 2$ and fix any coloring $C\colon[0,R]^{d+1}\ra c$.
		Say $X\subseteq[0,R]$ is \emph{$C$-$\min_d$-homogeneous} if $C(x_0,\dotsc,x_{d-1},y)=C(x_0,\dotsc,x_{d-1},z)$ holds for all $x_0<\dotsb<x_{d-1}<y<z$ in $X$.
		We will determine that for $R=2^{M^{d+1}}$ there exists $C$-$\min_d$-homogeneous subset $X$ of $[0,R]$ of size larger than $M+1$.
		Then by assumption the coloring $D\colon[X\smallsetminus\set{\max X}]^d\ra c$ defined by $D(x_0,\dotsc,x_{d-1})=C(x_0,\dotsc,x_{d-1},\max X)$ has a $D$-weakly homogeneous subset $H\subseteq X$ of size larger than $m$.
		Since $H$ is also $C$-weakly homogeneous, we get $wr^{d+1}(m)\leq R$.
		
		Now we assume, for a contradiction, that any $C$-$\min_d$-homogeneous subset of $[0,R]$ has size $\leq M+1$ and show that this implies $R<2^{M^{d+1}}$ in contrast with the definition of $R$.
		Using the bounded course of value primitive recursion we construct trees $T_i\subseteq\N^{<\N}$ ($i\leq R+1$) of increasing sequences.
		The use of trees, to show upper bounds for Ramsey numbers, is attributed to Erd\"{o}s and Rado.
		\begin{align*}
		T_0&=\set{\langle\rangle},\\
		T_{i+1}&=T_i\cup\set{\sigma\conc\langle i\rangle}\quad
		\parbox{18em}{where $\sigma$ is the leftmost longest branch of $T_i$ such that $\sigma\conc\langle i\rangle$ is $C$-$\min_d$-homogeneous.}
		\end{align*}
		
		Set $T=T_{R+1}$.
		We will find an upper bound for $\abs{T}=R+2$.
		By construction every $\sigma\in T_{R+1}$ is $C$-$\min_d$-homogeneous, so $\len(\sigma)\leq M+1$.
		Thus the depth of $T$ is at most $M+1$.
		
		Suppose that $\sigma\conc\langle i\rangle,\sigma\conc\langle j\rangle\in T$ for $i<j\leq R$.
		Then $\sigma\in T_j$ is longest such that $\sigma\conc\langle j\rangle$ is $C$-$\min_d$-homogeneous and $\sigma\conc\langle i,j\rangle$ can not be $C$-$\min_d$-homogeneous.
		Hence there exist $x_0<\dotsb<x_{d-2}$ in $\sigma\restriction(\len(\sigma)-1)$ such that $C(x_0,\dotsb,x_{d-2},(\sigma)_{\len(\sigma)-1},i)\neq C(x_0,\dotsb,x_{d-2},(\sigma)_{\len(\sigma)-1},j)$.
		This means that the number of direct descendants of $\sigma\in T$ of length $n$ is bounded by the number of mappings from (the set of $d-1$ elements from $n-1$) to $c$ colors.
		This number is below $c^{(M-1)^{d-1}}$.
		
		Therefore using $2\leq c\leq M$, one can compute that $\abs{T}\leq 2^{M^{d+1}}$, hence the desired contradiction $R<2^{M^{d+1}}$.
		This completes the proof.
	\end{proof}
	
	With \vspace{.5em}small computation, this lemma is enough to show the following:
	\begin{theorem}[$\RCA^{\ast}$]\label{ub}
		For each standard $d\geq 2$, $wr^d_c(m)\leq\begin{array}{c}
		2^{{\iddots}^{\smash{2^{m^{kc}}}}}
		\end{array}\hspace{-20pt}\Bigr\}\text{\scriptsize $(d-2)$ $2$'s}$ holds where $k=(d+1)!$.
	\end{theorem}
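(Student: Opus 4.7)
The plan is to induct on $d \geq 2$, using \textup{Theorem~\ref{wr}} as the base case ($wr^2_c(m) = m^c$) and the preceding lemma as the inductive step. Let $T_d(x)$ denote the tower of $(d-2)$ exponentials topped by $x$, so that $T_2(x) = x$ and $T_{d+1}(x) = 2^{T_d(x)}$. I aim to prove the explicit bound $wr^d_c(m) \leq T_d\bigl(m^{(d+1)!\,c}\bigr)$, which for each standard $d \geq 2$ coincides with the tower displayed in the theorem with $k = (d+1)!$.

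The base case is immediate: $m^c \leq m^{6c} = T_2(m^{3!\,c})$. For the inductive step, set $M = T_d(m^{(d+1)!\,c})$, so the preceding lemma gives $wr^{d+1}_c(m) \leq 2^{M^{d+1}}$, and it suffices to show $M^{d+1} \leq T_d(m^{(d+2)!\,c})$. Writing $x = m^{(d+1)!\,c}$ and using the telescoping identity $(d+2)! = (d+2)(d+1)!$, one has $m^{(d+2)!\,c} = x^{d+2}$, so the inductive step reduces to the clean inequality
\begin{equation*}
T_d(x)^{d+1} \leq T_d\bigl(x^{d+2}\bigr).
\end{equation*}

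For $d = 2$ this is $x^{d+1} \leq x^{d+2}$, trivial for $x \geq 1$. For $d \geq 3$, unfolding one level converts it into $(d+1)\,T_{d-1}(x) \leq T_{d-1}\bigl(x^{d+2}\bigr)$, which follows by a short secondary induction: the base $d = 3$ reads $(d+1)\,x \leq x^{d+2}$ and holds once $x \geq 2$, and the higher levels absorb any multiplicative constant at the top argument of $T_{d-1}$ by one more round of unfolding. The main obstacle is precisely this bookkeeping, namely verifying that the factorial exponent is preserved through the induction; the choice $k = (d+1)!$ is exactly what makes the telescoping work, converting the outer $(d+1)$-th power arising from the lemma into the required extra factor of $x$. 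Edge cases ($m \leq 1$ or $c \leq 1$) are noted separately: $c \in \{0,1\}$ is already handled inside the lemma, while $m \leq 1$ forces $wr^d_c(m) \leq 1$ directly from the definition.
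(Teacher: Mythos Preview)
Your proof is correct and follows exactly the approach the paper intends: the paper's own proof is merely the sentence ``With small computation, this lemma is enough to show the following,'' and you have supplied that computation by inducting on $d$ from \textup{Theorem~\ref{wr}} via the preceding lemma. Your bookkeeping with $k=(d+1)!$ and the auxiliary inequality $T_d(x)^{d+1}\le T_d(x^{d+2})$ is the natural way to make the ``small computation'' precise; the secondary induction is slightly sketched, but the clean statement $T_e(x^a)\ge a\,T_e(x)$ for $e\ge 2$, $x\ge 2$, $a\ge 1$ (proved by induction on $e$) immediately gives what you need.
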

	Notice that if we interpret the inequality as ``If $2^{{\iddots}^{\smash{2^{m^{kc}}}}}$ exists, then the inequality holds'' then we can quantify over all $d$, by $\ls^0_0$-induction.
	
	The next lemma gives a lower bound in the same manner.
	\begin{lemma}[$\RCA^{\ast}$]
		Let $m\geq d$ and $C\colon[0,R-1]^d\ra c$ be an $m$-bad coloring; that is, every $C$-weakly homogeneous set has size $\leq m$.
		Then there is an $m$-bad coloring $D\colon[0,2^R-1]^{d+1}\ra(4c+1)$.
	\end{lemma}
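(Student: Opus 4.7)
The plan is to carry out an Erd\H{o}s--Rado style ``step-up'' construction, adapted to weak homogeneity. I would first identify $[0, 2^R - 1]$ with the set of $R$-bit binary strings and introduce the map $f(x, y) \in [0, R-1]$ returning the position of the most significant bit at which $x < y$ disagree. The crucial fact, which is immediate from the definition, is the \emph{ultrametric} property: for $x < y < z$,
\[
  f(x, z) \;=\; \min\{f(x, y),\; f(y, z)\} \qquad \text{and} \qquad f(x, y) \ne f(y, z).
\]
In particular, consecutive values $\delta_i := f(x_i, x_{i+1})$ in any increasing tuple $x_0 < x_1 < \dotsb < x_d$ must differ.

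Next, I would define $D(x_0, \dotsc, x_d)$ by attaching to each such tuple a pair $(\mathrm{shape}, \kappa)$, where $\mathrm{shape}$ ranges over a four-element set---recording, for instance, the signs of $\delta_0 - \delta_1$ and of $\delta_{d-2} - \delta_{d-1}$---and where $\kappa \in \{0, \dotsc, c-1\}$ is the value of $C$ applied to the $\delta_i$'s in increasing order, whenever these are pairwise distinct. A single additional color $\star$ absorbs all remaining configurations, yielding $4c + 1$ colors in total.

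To establish $m$-badness of $D$, I would argue by contradiction: assuming $H = \{y_0 < \dotsb < y_m\}$ is $D$-weakly homogeneous of size $m + 1$, and setting $\delta'_i := f(y_i, y_{i+1})$, the constancy of the shape component across the overlapping $(d+1)$-windows of $H$ should propagate---using the ultrametric relation---to force the sequence $(\delta'_0, \dotsc, \delta'_{m-1})$ to be strictly monotone. The constancy of the $\kappa$-component then implies that $C$ is constant on every consecutive $d$-window of the sorted $\delta'$-sequence, producing a $C$-weakly homogeneous subset of $[0, R-1]$.

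The main obstacle is quantitative. A naive version of the above extracts a $C$-weakly homogeneous set of size only $m$, which does not contradict the $m$-badness of $C$. The reason for the four-class shape descriptor (rather than the two that would suffice merely to record the direction of monotonicity) is precisely to pick up one further element---by encoding independent endpoint data controlling the behaviour near both $y_0$ and $y_m$---and thereby upgrade the extracted set to size $m + 1$. Identifying this extra position within $[0, R-1]$, verifying that it is distinct from the $\delta'_i$, and checking that it chains correctly with them via $C$ is the delicate step of the construction, and is where the specific choice of $4c + 1$ colors becomes essential.
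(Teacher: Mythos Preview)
Your overall skeleton---the Erd\H{o}s--Rado step-up via the most-significant-differing-bit map, combined with auxiliary ``pattern'' colors to force monotonicity of the $\delta$-sequence---is exactly the right framework, and it is what the paper does. However, two of the concrete choices you make do not work.

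First, your shape descriptor $\bigl(\operatorname{sign}(\delta_0-\delta_1),\operatorname{sign}(\delta_{d-2}-\delta_{d-1})\bigr)$ fails to force monotonicity. Sliding the $(d+1)$-window across $H=\{y_0<\dots<y_m\}$, constancy of the first coordinate pins down $\operatorname{sign}(\delta'_i-\delta'_{i+1})$ only for $i\le m-d$, and constancy of the second only for $i\ge d-2$; for $d\ge 3$ and $m$ close to $d$ these ranges need not overlap, so the middle of the $\delta'$-sequence is unconstrained and need not be monotone (indeed the $\delta'_i$ need not even be pairwise distinct, so the homogeneous color can legitimately be your $\star$ and nothing is extracted). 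The paper instead uses the parities of the lengths of the maximal \emph{initial} increasing and decreasing runs of $(\delta_0,\dots,\delta_{d-1})$ as its two bits; a short case analysis then shows that weak homogeneity in both parities genuinely forces global monotonicity of the $\delta'$-sequence, regardless of $d$ and $m$.

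Second, and more seriously, your account of the extra ``$+1$'' color is wrong. Once monotonicity is forced, the product coloring in $4c$ colors is only $(m+1)$-bad: from $|H|=m+1$ one extracts a $C$-weakly homogeneous set $\{\delta'_0,\dots,\delta'_{m-1}\}$ of size exactly $m$, which does \emph{not} contradict $m$-badness of $C$. No amount of extra endpoint information inside the window can manufacture an $(m+1)$-st element of $[0,R-1]$, because there simply is no further $\delta'$ available. The paper's ``$+1$'' is an entirely different device: one passes from the $4c$-coloring $D$ to a $(4c+1)$-coloring $\overline{D}$ by reserving color $0$ for those tuples $(x_0,\dots,x_d)$ that admit no $y<x_0$ with $\{y,x_0,\dots,x_d\}$ $D$-weakly homogeneous. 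On an $\overline{D}$-weakly homogeneous $H$ of size $m+1$, the constant color cannot be $0$ (the second window is extended below by $y_0$), so every window---including the first---can be extended below by some $y<y_0$; this $y$ supplies the missing $\delta'_{-1}$, yielding a $C$-weakly homogeneous set of size $m+1$ and the desired contradiction.
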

	\begin{proof}
		This proof is a modified simplification of the construction, in Friedman's draft~\cite{F}, for the $d$-bad coloring to $(d+1)$-bad coloring.
		
		Let $C$ be given.
		Given $x<y$, put $\alpha(x,y)$ to be the largest position, counting from right, where the base 2 representation of $x$, $y$ differ;
		if they differ only at rightmost ($2^0$) digit then $\alpha(x,y)=0$;
		if the lengths of $x$ and $y$ in base 2 are different (i.e., $\log_2(x)<\log_2(y)$), add $0$'s to the left of the representation of $x$.
		For example, if $x=3$ and $y=11$ then
		\begin{align*}
		\text{representation of $x$ in base 2}&=\phantom{00}11\\
		\text{representation of $y$ in base 2}&=1011
		\end{align*}
		hence $\alpha(x,y)=3$.
		
		Note that $y<2^R$ implies $\alpha(x,y)<R$.
		Define $(d+1)$-dimensional $0$--$1$ colorings $g_0(x_0,\dotsc,x_d)$ and $g_1(x_0,\dotsc,x_d)$ to be the parities of the largest $i,j\leq d$ such that
		\begin{align*}
		\alpha(x_0,x_1)<\alpha(x_1,x_2)<\dotsb<\alpha(x_i,x_{i+1})\\
		\intertext{and}
		\alpha(x_0,x_1)>\alpha(x_1,x_2)>\dotsb>\alpha(x_j,x_{j+1})
		\end{align*}
		respectively.
		Then, we observe that if $H=\set{h_0<\dotsb<h_l}$ of size larger than $d+1$ is weakly homogeneous for both $g_0$ and $g_1$, then either
		\begin{align}
		\alpha(h_0,h_1)<\dotsb<\alpha(h_{l-1},h_l)\label{a}\\
		\intertext{or}
		\alpha(h_0,h_1)>\dotsb>\alpha(h_{l-1},h_l)\label{b}
		\end{align}
		holds.
		To see this, consider three cases $\alpha(h_0,h_1)=\alpha(h_1,h_2)$, $\alpha(h_0,h_1)<\alpha(h_1,h_2)$, and $\alpha(h_0,h_1)>\alpha(h_1,h_2)$.
		The first alternative can not happen since $h_0<h_1<h_2$.
		In the second case, by the $h_0$-homogeneity of $H$ we have \eqref{a}.
		Similarly the third case implies \eqref{b}.
		
		We will counstruct $D$ using $g_0$ and $g_1$ to make sure that every $D$-weakly homogeneous set has the property \eqref{a} or \eqref{b}.
		Define $\overline{C}\colon[0,2^R-1]^{d+1}\ra c$ to be
		\begin{equation*}
		\overline{C}(x_0,\dotsc,x_d)=
		\begin{cases}
		C(\alpha(x_0,x_1),\dotsc,\alpha(x_{d-1},x_d))\\
		\phantom{0}\hspace{10em}\text{if $\alpha(x_0,x_1)<\dotsb<\alpha(x_{d-1},x_d)$,}\\
		C(\alpha(x_{d-1},x_d),\dotsc,\alpha(x_0,x_1))\\
		\phantom{0}\hspace{10em}\text{if $\alpha(x_0,x_1)>\dotsb>\alpha(x_{d-1},x_d)$,}\\
		0\hspace{10em}\text{otherwise}
		\end{cases}
		\end{equation*}
		and combine $g_0$, $g_1$, $\overline{C}$ into a single function $D\colon[0,2^R-1]^{d+1}\ra 4c$.
		Then for every $D$-weakly homogeneous set $H=\set{h_0<h_1<\dotsb}$ of size $l+1$ larger than $d+1$, the set $H'=\set{\alpha(h_0,h_1),\alpha(h_1,h_2),\dotsc}$ is $C$-weakly homogeneous and has size $l$.
		Since $C$ is $m$-bad $D$ is $(m+1)$-bad.
		
		To obtain $m$-bad coloring define $\overline{D}\colon[0,2^R-1]^{d+1}\ra (4c+1)$ by
		\begin{equation*}
		\overline{D}(x_0,\dotsc,x_d)=
		\begin{cases}
		D(x_0,\dotsc,x_d)+1&\parbox{17em}{if there exists $y<x_0$ such that\\
		$\set{y,x_0,\dotsc,x_d}$ is $D$-weakly homogeneous,}\\
		0&\text{otherwise.}
		\end{cases}
		\end{equation*}
		
		Then every $\overline{D}$-weakly homogeneous subset of size larger than $d+1$ has size $\leq m$.
		This completes the proof.
	\end{proof}
	
	This lemma is enough to show the following:
	\begin{theorem}[$\RCA^{\ast}$]\label{lb}
		For each standard $d\geq 2$, $wr^d_{kc}(m)\geq\begin{array}{c}
		2^{{\iddots}^{\smash{2^{m^c}}}}
		\end{array}\hspace{-17pt}\Bigr\}\text{\scriptsize $(d-2)$ $2$'s}$ holds for all $c\geq 1$ and $m\geq d$, where $k=5^{d-2}$.
	\end{theorem}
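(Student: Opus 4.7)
The plan is to iterate the preceding lemma $d-2$ times, starting from an $m$-bad coloring at dimension~$2$ provided by \textup{Theorem~\ref{wr}}. At each step the dimension increases by one, the domain size is replaced by its binary exponential, and the color count $c_{d'}$ is replaced by $4c_{d'}+1$. Since $c\geq 1$ and $m\geq d$ (so the hypothesis $m\geq d'$ of the previous lemma holds at every intermediate $d'\in\set{2,\dotsc,d-1}$), this produces the desired tower on the domain side and the bound $5^{d-2}c=kc$ on the color side.

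Concretely: \textup{Theorem~\ref{wr}}, or more precisely \textup{Lemma~\ref{lem}\ref{O.}} applied to the decreasing lexicographic enumeration of $\set{0,\dotsc,m-1}^c$, supplies an $m$-bad coloring $C_2\colon[0,m^c-1]^2\ra c$. Inductively assume an $m$-bad coloring $C_{d'}\colon[0,R_{d'}-1]^{d'}\ra c_{d'}$ with $c_{d'}\leq 5^{d'-2}c$. The previous lemma then yields an $m$-bad coloring $C_{d'+1}\colon[0,2^{R_{d'}}-1]^{d'+1}\ra(4c_{d'}+1)$; setting $R_{d'+1}=2^{R_{d'}}$ and $c_{d'+1}=4c_{d'}+1$, we obtain $c_{d'+1}\leq 5c_{d'}\leq 5^{d'-1}c$ by absorbing the $+1$ using $1\leq c_{d'}$. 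After $d-2$ iterations we have an $m$-bad coloring on $[0,R_d-1]^d$ into $c_d\leq kc$ colors, where $R_d$ is the tower of $2$'s of height $(d-2)$ on top of $m^c$. Padding trivially to $kc$ colors (by leaving surplus color values unused) gives $wr^d_{kc}(m)\geq R_d$, as required.

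The only genuine calculation is the color recursion $c_{d'+1}\leq 5^{d'-1}c$, which is immediate once $c\geq 1$ is used to swallow the additive constant; the tower shape of $R_d$ follows directly from $R_{d'+1}=2^{R_{d'}}$. I expect no real obstacle beyond bookkeeping. As with \textup{Theorem~\ref{ub}}, the induction on $d$ is carried out in $\RCA^{\ast}$ by phrasing the statement conditionally on the existence of the tower and then invoking $\ls^0_0$-induction.
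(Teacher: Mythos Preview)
Your proposal is correct and matches the paper's intended argument: the paper merely states that the preceding lemma ``is enough'' and leaves the iteration implicit, and you have supplied exactly that iteration---starting from the $m$-bad $2$-dimensional coloring coming from the lexicographic bad sequence, applying the lemma $d-2$ times, and tracking the color bound $c_{d'+1}=4c_{d'}+1\leq 5c_{d'}$ to land inside $5^{d-2}c=kc$. The bookkeeping (hypothesis $m\geq d'$, padding to $kc$ colors, and the conditional-on-tower formulation for $\ls^0_0$-induction in $\RCA^{\ast}$) is all handled correctly.
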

	Notice \vspace{.5em}again that we may interpret this as follows:
	For all $d$, if the right-hand side exists then there is $m$-bad coloring $C\colon[0,\begin{array}{c}
		2^{{\iddots}^{\smash{2^{m^c}}}}
		\end{array}\hspace{-17pt}\Bigr\}\text{\scriptsize $(d-2)$ $2$'s}-1]\ra c.$
	
	So we also have this:
	\emph{For all $d$, if the function $x\mapsto\begin{array}{c}
		2^{{\iddots}^{\smash{2^x}}}
		\end{array}\hspace{-10pt}\Bigr\}\text{\scriptsize $(d-2)$ $2$'s}$ exists, then the inequalities from \textup{Theorems~\ref{ub},~\ref{lb}} hold.}
	
	\section{Reverse Mathematics}
	\textup{Lemma~\ref{lem}} directly implies the following:
	\begin{corollary}[$\RCA^{\ast}$]\label{MDL}
		For each $f$ and $c$, $\MDL^f_c$ and $\WPH^f_c$ are equivalent.
	\end{corollary}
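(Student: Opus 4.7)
The plan is to derive both implications directly from the two parts of Lemma~\ref{lem}, using the contrapositive on each side: $\WPH^f_c$ fails at parameter $a$ exactly when an $f$-bad coloring on some $[a,R]^2$ exists for every $R$, and $\MDL^f_c$ fails at parameter $a$ exactly when an $(a,f)$-bad sequence of every length $D+1$ exists. Since each statement is of the form ``$\forall a\,\exists N\,\lnot(\text{witness of badness of length related to }N)$'', the conversions between bad colorings and bad sequences supplied by Lemma~\ref{lem} will translate one into the other with the length parameters related by $R = a + D$.

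For $\MDL^f_c \Rightarrow \WPH^f_c$, I would fix $a$, apply $\MDL^f_c$ to obtain $D$ witnessing $\MDL^f_c(a,D)$, and set $R = a + D$. If $\WPH^f_c(a,R)$ failed, an $f$-bad coloring $C \colon [a,R]^2 \to c$ would exist, and Lemma~\ref{lem}\ref{F.} would yield an $(a,f)$-bad sequence $\overline{m}_0, \dotsc, \overline{m}_{R-a} = \overline{m}_0, \dotsc, \overline{m}_D$, contradicting $\MDL^f_c(a,D)$. For the converse, I would fix $a$, apply $\WPH^f_c$ to obtain $R$ witnessing $\WPH^f_c(a,R)$, and set $D = R - a$. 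If $\MDL^f_c(a,D)$ failed, an $(a,f)$-bad sequence $\overline{m}_0, \dotsc, \overline{m}_D$ would exist, and Lemma~\ref{lem}\ref{O.} would yield an $f$-bad coloring $C \colon [a, a+D]^2 \to c = [a,R]^2 \to c$, contradicting the choice of $R$.

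There is essentially no obstacle here: the corollary is a bookkeeping exercise matching the quantifier pattern ``$\forall a\,\exists N$'' in both principles with the symmetric conversions of Lemma~\ref{lem}. The only thing to be careful about in $\RCA^{\ast}$ is that the two arguments above are purely first-order and use only the existence of the objects produced by Lemma~\ref{lem}, which has already been established in the base theory. Hence the equivalence holds over $\RCA^{\ast}$ without any additional set-existence or induction strength.
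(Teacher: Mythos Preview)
Your proposal is correct and follows exactly the route the paper intends: the paper's proof of Corollary~\ref{MDL} consists solely of the remark that Lemma~\ref{lem} directly implies the equivalence, and you have simply spelled out the quantifier bookkeeping (matching $R=a+D$) that this remark leaves implicit. There is nothing to add or correct.
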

	
	In this section we establish the equivalence between \emph{Dickson's lemma} and \emph{the relativized weak Paris--Harrington principle}.
	
	\begin{definition}[Dickson's lemma and the relativized weak Paris--Harrington principle]\label{defRPH}
		For $c\in\N$, \emph{Dickson's lemma for $c$-tuples} (denoted $\DL_c$) is the statement that for every infinite sequence $\overline{m}_0,\overline{m}_1,\dotsc\in\N^c$ there exists $i<j$ such that $\overline{m}_i\leq\overline{m}_j$.
		We write $\DL$ for $\forall c\DL_c$ for short.
		\emph{The relativized weak Paris--Harrington principle for $c$-tuples} (denoted $\RPH_c$) is the statement that for every $f\colon\N\ra\N$ $\WPH^f_c$ holds.
	\end{definition}
	
	For the equivalence, it is useful to have \emph{weak K\"{o}nig's lemma}.
	
	\begin{definition}[$\WKL^{\ast}$]
		$\WKL^{\ast}$ is the subsystem of second order arithmetic consisting of $\RCA^{\ast}$ plus weak K\"{o}nig's lemma.
	\end{definition}
	
	\begin{proposition}\label{cons?}
		Let $\varphi(c)$ be $\lp^1_1$.
		Assume that $\WKL^{\ast}$ proves $\forall c(\DL_c\ra\varphi(c))$.
		Then $\RCA^{\ast}$ already proves $\forall c(\DL_c\ra\varphi(c))$.
	\end{proposition}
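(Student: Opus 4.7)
The plan is a standard model-theoretic $\lp^1_1$-conservation argument. I would prove the contrapositive: assuming $\RCA^{\ast}$ fails to prove $\forall c(\DL_c\ra\varphi(c))$, I exhibit a model of $\WKL^{\ast}$ in which the implication fails at some $c$, contradicting the hypothesis.

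Suppose $\RCA^{\ast}$ does not prove the statement. By completeness, fix a countable $M\models\RCA^{\ast}$ and some $c_0\in M$ with $M\models\DL_{c_0}\wedge\neg\varphi(c_0)$. Unfolding the definition, $\DL_c$ reads $\forall F\,\exists i<j\,(F(i)\leq F(j))$, which is $\lp^1_1$ in the sole number parameter $c$. Writing $\varphi(c)\equiv\forall Y\,\psi(c,Y)$ with $\psi$ arithmetical, fix $Y_0\in M$ witnessing $M\models\neg\psi(c_0,Y_0)$.

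The key step is to extend $M$ to a countable $M'\models\WKL^{\ast}$ satisfying (i) $M$ and $M'$ share first-order parts, and (ii) every $\lp^1_1$ sentence with set parameters from $M$ true in $M$ remains true in $M'$. This is the Harrington-style $\lp^1_1$-preserving extension, adapted to $\RCA^{\ast}$ by Simpson and Smith~\cite{S-S} via iterated adjunction of low paths through trees in $M$. Clause (ii) applied to $\DL_{c_0}$ gives $M'\models\DL_{c_0}$; clause (i) implies that the arithmetical formula $\neg\psi(c_0,Y_0)$ is absolute from $M$ to $M'$, so $M'\models\neg\varphi(c_0)$. Hence $M'\models\WKL^{\ast}\wedge\DL_{c_0}\wedge\neg\varphi(c_0)$, contradicting the hypothesis $\WKL^{\ast}\vdash\forall c(\DL_c\ra\varphi(c))$.

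The only non-routine ingredient is the parameterised $\lp^1_1$-preservation in clause (ii); this is the heart of the Simpson--Smith construction and is where the low-tree forcing does its real work. The remaining steps amount to bookkeeping: confirming that $\DL_c$ is genuinely $\lp^1_1$ in the number parameter $c$, and invoking arithmetical absoluteness to transfer $\neg\psi(c_0,Y_0)$ from $M$ to $M'$. I expect the main difficulty in the write-up to be extracting the parameterised form of the conservation lemma from~\cite{S-S}, rather than the argument itself, which is then immediate.
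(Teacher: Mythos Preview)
Your clause (ii) is false, and this is where the argument breaks. The Harrington/Simpson--Smith construction does \emph{not} preserve arbitrary $\lp^1_1$ sentences upward from $M$ to the $\WKL^{\ast}$-extension $M'$. Concretely: take any countable $M\models\RCA^{\ast}$ that is not already a model of $\WKL^{\ast}$, and pick an infinite binary tree $T\in\mathcal{S}_M$ with no infinite path in $\mathcal{S}_M$. The sentence ``$T$ has no infinite path'' is $\lp^1_1$ with parameter $T$ and is true in $M$, but it must fail in \emph{every} $\omega$-extension $M'\models\WKL^{\ast}$. So no construction of the sort you describe can satisfy (ii) in full generality. What the low-path iteration actually gives you is upward preservation of \emph{arithmetical} (hence $\ls^1_1$) sentences with parameters---enough to carry $\neg\varphi(c_0)$ across, but not enough to carry $\DL_{c_0}$.

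The paper closes exactly this gap by not appealing to a general preservation principle at all. It first replaces $\DL_c$ by the $\RCA^{\ast}$-equivalent statement $\mathrm{WO}(\omega^c)$ (via~\cite[Lemma~3.6]{S}), and then invokes the specific fact, established in~\cite[Lemma~4.5]{S-S} and~\cite[Theorem~3.2]{K-Y}, that one can add a path through any given infinite binary tree while preserving $\RCA^{\ast}+\mathrm{WO}(\omega^a)$. Iterating this yields an $\omega$-extension $M'\models\WKL^{\ast}+\mathrm{WO}(\omega^a)$, and then $\neg\varphi(a)$ goes up because it is $\ls^1_1$. So the real content is not ``extract a parameterised conservation lemma from~\cite{S-S}'' but rather ``show that the particular $\lp^1_1$ hypothesis $\DL_{c_0}$ (equivalently $\mathrm{WO}(\omega^{c_0})$) survives the forcing''---and that requires a separate argument specific to well-orderings, not a black-box $\lp^1_1$ preservation.
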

	\begin{proof}
		By formalizing \cite[\textup{Lemma~3.6}]{S} in $\RCA^{\ast}$, we can show that $\DL_c$ is equivalent to $\mathrm{WO}(\omega^c)$ for any $c$ over $\RCA^{\ast}$.
		Thus we assume that $\RCA^{\ast}$ does not prove $\forall c(\mathrm{WO}(\omega^c)\ra\varphi(c))$.
		Then there is a model $M=(\abs{M},\mathcal{S})$ and $a\in\abs{M}$ such that $M\models\RCA^{\ast}+\mathrm{WO}(\omega^a)+\lnot\varphi(a)$.
		Since $\lnot\varphi(c)$ is $\ls^1_1$, it is enough to show that there is $\mathcal{S}'\supseteq\mathcal{S}$ such that $(M,\mathcal{S}')\models\WKL^{\ast}+\mathrm{WO}(\omega^a)$.
		This follows from the fact that for each infinite binary tree $T\in\mathcal{S}$ there is $\mathcal{S}'\supseteq\mathcal{S}$ containing an infinite path of $T$ such that $(M,\mathcal{S}')\models\RCA^{\ast}+\mathrm{WO}(\omega^a)$, and this can be shown as in~\cite[\textup{Lemma~4.5}]{S-S} or~\cite[\textup{Theorem~3.2}]{K-Y}.
	\end{proof}
	
	\begin{theorem}[$\RCA^{\ast}$]\label{DLMDL}
		For each $c$, $\DL_c$ and $\forall f\MDL^f_c$ are equivalent.
	\end{theorem}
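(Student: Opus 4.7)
The plan is to prove the two implications separately, handling the easy one directly and the harder one inside $\WKL^{\ast}$ before pulling back via \textup{Proposition~\ref{cons?}}.

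For $\forall f\MDL^f_c\Ra\DL_c$: given an infinite sequence $\overline{m}_0,\overline{m}_1,\dotsc\in\N^c$ with $\overline{m}_i\nleq\overline{m}_j$ for all $i<j$, I define $f\colon\N\ra\N$ by $f(n)=\norm*{\overline{m}_n}+1$, which exists in $\RCA^{\ast}$ by $\ld^0_1$-comprehension from the given sequence. Instantiating $\MDL^f_c$ at $a=0$ produces some $D$, but the initial segment $\overline{m}_0,\dotsc,\overline{m}_D$ is $(0,f)$-bounded by construction, so $\MDL^f_c(0,D)$ forces a comparable pair, contradicting badness.

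For $\DL_c\Ra\forall f\MDL^f_c$: since $\MDL^f_c$ is arithmetical in $f$, the conclusion $\forall f\MDL^f_c$ is $\lp^1_1$, so by \textup{Proposition~\ref{cons?}} it suffices to prove the implication in $\WKL^{\ast}$. Working there, I assume $\DL_c$, fix $f$ and $a$, and suppose toward a contradiction that $\MDL^f_c(a,D)$ fails for every $D$. The set $T$ of codes of finite $(a,f)$-bad sequences is $\ls^0_0$-definable with $f$ as a parameter, hence exists as a tree by $\ld^0_1$-comprehension; its branching at level $i$ is bounded by $f(a+i)^c$, a bound available via $\mathtt{exp}$. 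The failure assumption supplies $T$ with a node of every length, so encoding $T$ as an infinite subtree of $2^{<\N}$ through its level-wise bounds and applying weak K\"{o}nig's lemma yields an infinite branch, which decodes to an infinite $(a,f)$-bad sequence, contradicting $\DL_c$.

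The only real concern I anticipate is making sure the tree construction and its binary encoding stay within the axioms of $\RCA^{\ast}$; but all the relevant predicates are $\ls^0_0$ in $f$ with bounded quantifiers referencing only $\mathtt{exp}$, so $\ld^0_1$-comprehension suffices, and the serious logical work is delegated to \textup{Proposition~\ref{cons?}}, which supplies weak K\"{o}nig's lemma ``for free'' once the conclusion has been recognized as $\lp^1_1$.
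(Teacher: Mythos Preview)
Your proof is correct and follows essentially the same approach as the paper: the easy direction builds $f$ from a given infinite bad sequence (you take $f(n)=\norm{\overline{m}_n}+1$ while the paper takes the monotone $f(i)=\max_{j\leq i}\norm{\overline{m}_j}+1$, but either works), and the hard direction uses a bounded-tree compactness argument in $\WKL^{\ast}$ followed by \textup{Proposition~\ref{cons?}} to descend to $\RCA^{\ast}$. The only cosmetic difference is that the paper invokes bounded K\"onig's lemma directly (citing its equivalence with weak K\"onig's lemma) rather than explicitly encoding the tree into $2^{<\N}$.
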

	\begin{proof}
		For left-to-right, we firstly reason in $\WKL^{\ast}$.
		Assume $\lnot\forall f\MDL^f_c$.
		Then there exists $f\colon\N\ra\N$ such that there is an arbitrarily long (finite) $(a,f)$-bad sequence $\overline{m}_0,\overline{m}_1,\dotsc\in\N^c$.
		For $\lnot\DL_c$, we show then there is an infinite bad sequence.
		Let $T\in\N^{<\N}$ be the tree consisting of (the codes of) $(a,f)$-bad sequences $\left\langle\overline{m}_0,\overline{m}_1,\dotsc\right\rangle$.
		By the assumption $T$ is infinite, and bounded because our code of $c$-tuple $\overline{m}_i$ is bounded exponentially in $f(a+i)$.
		By bounded K\"{o}nig's lemma (which is equivalent to weak K\"{o}nig's lemma~\cite[\textup{Lemma~IV.1.4}]{sosoa}), $T$ has an infinite path, which codes an infinite bad sequence.
		
		We have shown $\forall c\big(\DL_c\ra\forall f\MDL^f_c\big)$ over $\WKL^{\ast}$.
		This, together with \textup{Proposition~\ref{cons?}}, completes the proof of the direction left-to-right.
		
		For the converse, we assume $\lnot\DL_c$.
		Then there exists an infinite bad sequence $\overline{m}_0,\overline{m}_1,\dotsc\in\N^c$.
		Taking $f(i)=\max_{j\leq i}\norm*{\overline{m}_j}+1$, we have arbitrarily long $(0,f)$-bad sequences, thus $\lnot\MDL^f_c$ holds.
		This completes the proof.
	\end{proof}
	
	\begin{corollary}[$\RCA^{\ast}$]\label{DL}
		For each $c$, $\DL_c$ and $\RPH_c$ are equivalent.
		Hence, $\mathrm{WO}(\omega^c)$ and $\RPH_c$ are equivalent.
		Especially, $\DL$, $\mathrm{WO}(\omega^\omega)$, and $\forall c\RPH_c$ are pairwise equivalent.
	\end{corollary}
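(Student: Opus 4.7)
The plan is to simply chain together the equivalences already at hand, so there is no real obstacle: the proof is essentially bookkeeping. First I would observe that, by definition, $\RPH_c$ is $\forall f\,\WPH^f_c$, so applying \textup{Corollary~\ref{MDL}} uniformly in $f$ gives $\RPH_c\lr\forall f\,\MDL^f_c$. Combining this with \textup{Theorem~\ref{DLMDL}} ($\DL_c\lr\forall f\,\MDL^f_c$) yields the first equivalence $\DL_c\lr\RPH_c$.

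Next, for the identification with $\mathrm{WO}(\omega^c)$, I would appeal to the formalization in $\RCA^{\ast}$ of \cite[\textup{Lemma~3.6}]{S} already invoked in the proof of \textup{Proposition~\ref{cons?}}, which gives $\DL_c\lr\mathrm{WO}(\omega^c)$ over $\RCA^{\ast}$. Transitivity with the equivalence just obtained delivers $\mathrm{WO}(\omega^c)\lr\RPH_c$.

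Finally, I would universally quantify over $c$. Since $\DL$ is $\forall c\,\DL_c$ by definition, quantifying the equivalence $\DL_c\lr\RPH_c$ gives $\DL\lr\forall c\,\RPH_c$. The remaining piece is the equivalence $\forall c\,\mathrm{WO}(\omega^c)\lr\mathrm{WO}(\omega^{\omega})$, which is standard and provable in $\RCA^{\ast}$ by the obvious order-preserving embeddings between the initial segment $\omega^c$ of $\omega^{\omega}$ and $\omega^{\omega}$ itself. Putting all three together shows that $\DL$, $\mathrm{WO}(\omega^{\omega})$, and $\forall c\,\RPH_c$ are pairwise equivalent, completing the proof.
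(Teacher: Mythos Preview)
Your proposal is correct and follows essentially the same route as the paper: the paper's proof simply cites \textup{Theorem~\ref{DLMDL}}, \textup{Corollary~\ref{MDL}}, \textup{Definition~\ref{defRPH}}, and \cite[\textup{Lemma~3.6}]{S}, which are exactly the ingredients you chain together. Your added remark about $\forall c\,\mathrm{WO}(\omega^c)\lr\mathrm{WO}(\omega^{\omega})$ makes explicit a step the paper leaves implicit, but the argument is otherwise identical.
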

	\begin{proof}
		By \textup{Theorem~\ref{DLMDL}}, \textup{Corollary~\ref{MDL}}, \textup{Definition~\ref{defRPH}}, and~\cite[\textup{Lemma~3.6}]{S}.
	\end{proof}
	
	\section{Phase Transition}\label{secpt}
	In this section, we use $\WPH^{d,f}$ to state that ``for all $c$ and $a$ there exists $R$ such that for every $C\colon[a,R]^d\ra c$ there exists $C$-weakly homogeneous $H\subseteq R$ such that $\abs{H}>f(\min H)$.''
	
	By \textup{Corollary~\ref{func}}, we know that $\RCA$ does not prove $\WPH^{2,\id}$.
	For higher dimension, it is shown in~\cite{F-P} that $\RCA^{\ast}+\mathsf{I}\ls^0_d$ does not prove $\WPH^{d+1,\id}$.
	
	Conversely, by \textup{Theorem~\ref{ub}} we know that for each standard $d$ $\RCA^{\ast}$ proves $\forall m\WPH^{d,x\mapsto m}$.
	
	In this section we classify some functions $f$, between (ordered by eventual domination) the identity and constants, according to the provability of $\WPH^{d,f}$.
	This classification fits in the general phase transitions program which was started by Andreas Weiermann.
	Our results imply that, unlike for the Paris--Harrington principle~\cite{W-H}, the phase transition for $\WPH^2$ follows those for Dickson's lemma (exercise for the reader), Kanamori--McAloon for pairs~\cite{C-L-W}, and Higman's lemma for 2-letter alphabet~\cite{G-W}.
	The higher dimensional cases follow the transitions for Kanamori--MacAloon.
	
	\begin{theorem}\label{pt}
		Let $d\geq 2$ be standard.
		\begin{enumerate}
			\item
			$\RCA^{\ast}$ proves $\WPH^{d,f}$ for $f(x)=\log^{(d-1)}(x)$.
			\item
			For all $n$ standard, $\RCA^{\ast}+\mathsf{I}\ls^0_{d-1}$ does not prove $\WPH^{d,f_n}$ for each $f_n(x)=\sqrt[n]{\log^{(d-2)}(x)}$.
		\end{enumerate}
		(Here $\RCA^{\ast}$ can be replaced by $\mathsf{EFA}$.)
	\end{theorem}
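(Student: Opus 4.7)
The two parts are the provability and unprovability sides of a phase transition, driven by the matching bounds in Theorems~\ref{ub} and~\ref{lb}.

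For part~1, the plan is to pick $R$ as a tower of exponentials of fixed height $d-1$ in $a$ and $c$ and then apply Theorem~\ref{ub}. With $k = (d+1)!$, choose $R$ so that $\log^{(d-1)}(R) = p(a,c)$ for some explicit elementary $p$ satisfying $p(a,c) \geq \max(a, 2kc+1)$. Since $f = \log^{(d-1)}$ is nondecreasing, $f(\min H) \leq p(a,c)$, so it suffices to extract a weakly homogeneous set $H \subseteq [a, R]$ of size $p(a,c)+1$. Theorem~\ref{ub} guarantees such an $H$ once $R - a$ exceeds the tower of height $d-2$ whose top exponent is $p(a,c)^{kc}$, and this inequality follows from $\exp(p(a,c)) \geq p(a,c)^{kc}$. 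The whole construction of $R$ is elementary in $a$ and $c$, so $\mathsf{EFA}$ (hence $\RCA^{\ast}$) proves $\WPH^{d, \log^{(d-1)}}$.

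For part~2, the plan is to use Theorem~\ref{lb} to exhibit, for every $a$ and every $c$, explicit bad colorings forcing the least witness $R$ of $\WPH^{d, f_n}(a, R)$ to be enormous, and then invoke the known phase transition for Kanamori--McAloon. Given $a$, set $m := \lfloor f_n(a)\rfloor = \lfloor \sqrt[n]{\log^{(d-2)}(a)}\rfloor$; for each $c \geq 1$, Theorem~\ref{lb} supplies an $m$-bad coloring on an interval whose length is a tower of height $d-2$ with top exponent $m^c$, using $5^{d-2} c$ colors. Translating the domain to start at $a$ and using $\min H \geq a$ together with the monotonicity of $f_n$, every weakly homogeneous set $H$ in this coloring satisfies $\abs{H} \leq m \leq f_n(\min H)$, so it witnesses the failure of $\WPH^{d, f_n}(a, R)$ for every $R$ smaller than the translated right endpoint. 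As $c$ varies, the two-variable Skolem function for $\WPH^{d, f_n}$ therefore dominates $(a, c) \mapsto$ (the tower of height $d-2$ with top exponent $(\log^{(d-2)}(a))^{c/n}$); this rate matches the Kanamori--McAloon phase-transition threshold at dimension $d$ established in~\cite{C-L-W}, which is exactly the level that breaks the provably total functions of $\RCA^{\ast} + \mathsf{I}\ls^0_{d-1}$.

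The routine work is in part~1. The delicate step is the final reduction in part~2, namely, converting the quantitative lower bound of Theorem~\ref{lb} into unprovability in $\RCA^{\ast} + \mathsf{I}\ls^0_{d-1}$. I would carry this out either by exhibiting an explicit reduction between $m$-bad $\WPH^d$-colorings and KM-regressive colorings at comparable parameters so that the results of~\cite{C-L-W} apply verbatim, or by repeating the indicator argument from~\cite{C-L-W} in the weak-Ramsey setting, using the tree construction in the proof of Theorem~\ref{lb} in place of the KM lower bound.
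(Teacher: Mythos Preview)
Your plan for part~1 is essentially the paper's: choose $R$ as a tower of height $d-1$ above a suitable $m\geq a$, so that $\log^{(d-1)}(R)\leq m$, and then invoke Theorem~\ref{ub} to extract a weakly homogeneous set of size $>m$. That is fine.

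For part~2 there is a genuine gap. The lower bound you extract from a single application of Theorem~\ref{lb} is far too weak to yield unprovability. Concretely, take $d=2$: then $f_n(x)=\sqrt[n]{x}$, Theorem~\ref{lb} gives an $m$-bad coloring on $[0,m^c-1]$ in $c$ colors, and your translation with $m=f_n(a)$ produces an $f_n$-bad coloring on $[a,a+a^{c/n}-1]$. So your lower bound on the Skolem function for $\WPH^{2,f_n}$ is only $(a,c)\mapsto a^{c/n}$, which is elementary and certainly provably total in $\mathsf{I}\ls^0_1$. Matching ``the Kanamori--McAloon threshold'' in the parameter $f$ is not the same as matching the growth rate needed for independence; your bound does not come close to Ackermannian growth, and neither of your suggested fallbacks (a direct reduction to KM colorings, or rerunning an indicator argument using only this bound) can succeed from such a weak quantitative input.

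What is missing is the idea the paper actually uses: rather than applying Theorem~\ref{lb} once, one proves the implication $\WPH^{d,f_n}\to\WPH^{d,\id}$ over $\RCA^{\ast}$ and then quotes~\cite{F-P} for the unprovability of $\WPH^{d,\id}$. The reduction takes an $\id$-bad coloring $C$ and builds an $f_n$-bad coloring $D$ by \emph{gluing}: on tuples whose $f_n$-values are strictly increasing, $D$ copies $C$ applied to those values; on tuples lying in a single level set $\{x:f_n(x)=m\}$, $D$ uses the $m$-bad coloring supplied by Theorem~\ref{lb} (which fits because the level set has length at most a tower of height $d-2$ with top $m^{n+1}$); auxiliary parity colorings force every $D$-weakly homogeneous set into one of these two regimes. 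This is the step that converts the local bound of Theorem~\ref{lb} into a global reduction; without it, the argument does not go through.
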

	\begin{proof}[Proof for 1]
		Let $d,c,a$ given.
		In the \textup{Theorem~\ref{ub}} we have shown that for every coloring $C\colon[0,R]^d\ra c$ there exists a $C$-weakly homogeneous set of size larger than $m$, where $R$ is the right-hand side of the inequality in \textup{Theorem~\ref{ub}}.
		By taking $m\geq a$ large enough so that $m^{kc}\leq 2^m$, we have $f(R)\leq m$.
		Then, for every coloring $C\colon[a,a+R]^d\ra c$, there exists a $C$-weakly homogeneous set $H$ such that $\abs{H}>m\geq f(R)\geq f(\min H)$.
		\renewcommand{\qedsymbol}{\relax}
	\end{proof}
	\begin{proof}[Proof for 2]
		Let $d,n$ be given.
		We show in $\RCA^{\ast}$ that $\WPH^{d,f_n}\ra\WPH^{d,\id}$.
		By~\cite{F-P} this implies that $\RCA^{\ast}+\mathsf{I}\ls^0_{d-1}$ can not prove $\WPH^{d,f_n}$.
		
		Let\vspace{.5em} $C\colon[a,R]^d\ra c$ be given $\id$-bad coloring.
		We construct $f_n$-bad coloring $D\colon[\overline{a},R]^d\ra\overline{c}$ where $\overline{a}={f_n}^{-1}(a)=\begin{array}{c}
			2^{{\iddots}^{2^{\smash{a^c}}}}
			\end{array}\hspace{-15pt}\Bigr\}\text{\scriptsize $(d-2)$ $2$'s}$ and $\overline{c}=4(c+5^{d-2}\cdot(n+1))$.
		
		Without loss of generality, we may assume $(a+1)^n\leq a^{n+1}$.
		For any $m$, let $C'_m$ be an $m$-bad coloring $C'_m\colon[0,R'-1]^d\ra 5^{d-2}\cdot (n+1)$ where $R'$ is the right-hand side of the inequality from \textup{Theorem~\ref{lb}}, with $(n+1)$ instead of $c$.
		An easy computation shows $x<R'$ whenever $f(x)=m$.
		
		Define $\overline{C}\colon[\overline{a},R]\ra(c+5^{d-2}\cdot(n+1))$ by
		\begin{equation*}
		\overline{C}(x_0,\dotsc,x_{d-1})=
		\begin{cases}
		C(f_n(x_0),\dotsc,f_n(x_{d-1}))&\text{if $f_n(x_0)<\dotsb<f_n(x_{d-1})$,}\\
		C'_{f_n(x_0)}(x_0\dotsc,x_{d-1})&\text{if $f_n(x_0)=\dotsb=f_n(x_{d-1})$,}\\
		0&\text{otherwise.}
		\end{cases}
		\end{equation*}
		
		We also define auxiliary colorings $g_0(x_0,\dotsc,x_{d-1})$ and $g_1(x_0,\dotsc,x_{d-1})$ to be the parities of the largest $i,j\leq d-1$ such that
		\begin{align*}
		f_n(x_0)=f_n(x_1)=\dotsb=f_n(x_i)\\
		\intertext{and}
		f_n(x_0)<f_n(x_1)<\dotsb<f_n(x_j)
		\end{align*}
		respectively.
		
		Combine $g_0$ and $g_1$ with $\overline{C}$ into a single coloring $D\colon[\overline{a},R]^d\ra\overline{c}$ to ensure that every $D$-weakly homogeneous set $H=\set{h_0<h_1<\dotsb<h_{l-1}}$ has the property either
		\begin{align*}
		f_n(x_0)=f_n(x_1)=\dotsb=f_n(x_{l-1})\\
		\intertext{or}
		f_n(x_0)<f_n(x_1)<\dotsb<f_n(x_{l-1}).
		\end{align*}
		
		It is clear that $D$ is $f_n$-bad.
		\renewcommand{\qedsymbol}{\origqedsymbol}
	\end{proof}
	
	We give a sharpening of the result above.
	Given a countable ordinal $\alpha$, let $F_{\alpha}$ be the $\alpha$-th fast growing function and put
	\begin{equation*}
	f_{\alpha}(x)=\sqrt[F^{-1}_{\alpha}(x)]{\log^{(d-2)}(x)},
	\end{equation*}
	where $F^{-1}_{\alpha}$ is formalized using a $\ld^0_1$ formula as in~\cite{H-P}.
	(For convenience, define $\sqrt[0]{x}=x$.)
	Notice that for $\alpha\geq 3$, $f_{\alpha}(x)$ eventually lies strictly between $\log^{(d-1)}(x)$ and $\sqrt[n]{\log^{(d-2)}(x)}$.
	
	\begin{theorem}
		Let $d\geq 2$ be standard.
		\begin{enumerate}
			\item
			For each $\alpha<\omega_{d-1}$, $\RCA^{\ast}+\mathsf{I}\ls^0_{d-1}$ proves $\WPH^{d,f_{\alpha}}$.
			\item
			$\RCA^{\ast}+\mathsf{I}\ls^0_{d-1}$ does not prove $\WPH^{d,f_{\omega_{d-1}}}$.
		\end{enumerate}
		Here we denote $\omega_x=\begin{array}{c}
		\omega^{{\iddots}^{\omega}}
		\end{array}\Bigr\}\text{\scriptsize $x$ $\omega$'s}$.
	\end{theorem}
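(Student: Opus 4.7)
The plan is to sharpen Theorem~\ref{pt} by replacing the constant $n$ with a parameter drawn from the fast-growing hierarchy.

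For part 1, I fix standard $d\geq 2$ and $\alpha<\omega_{d-1}$ and exploit the standard calibration: whenever $\alpha<\omega_{d-1}$, $F_\alpha$ is provably total in $\RCA^\ast+\mathsf{I}\ls^0_{d-1}$. Given $c$ and $a$, I pick $m\geq a$ large enough that $F_\alpha(kc)\leq T_d(m^{kc})$, where $T_d(x)$ denotes a tower of $2$'s of height $d-2$ with $x$ on top and $k=(d+1)!$. Set $R=T_d(m^{kc})$. By \textup{Theorem~\ref{ub}}, any $C\colon[a,a+R]^d\ra c$ has a weakly homogeneous $H$ with $|H|>m$. The choice of $m$ ensures $F_\alpha^{-1}(x)\geq kc$ for every $x\geq F_\alpha(kc)$, and by enlarging the lower endpoint if necessary we may assume $\min H\geq F_\alpha(kc)$, so
\[
f_\alpha(\min H)=\bigl(\log^{(d-2)}\min H\bigr)^{1/F^{-1}_\alpha(\min H)}\leq(m^{kc})^{1/kc}=m<|H|.
\]

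For part 2, I show in $\RCA^\ast$ that $\WPH^{d,f_{\omega_{d-1}}}\ra\WPH^{d,\id}$; combined with the unprovability of $\WPH^{d,\id}$ in $\RCA^\ast+\mathsf{I}\ls^0_{d-1}$ from~\cite{F-P}, this yields the conclusion. Contrapositively, starting from an $\id$-bad $C\colon[a,R]^d\ra c$ I construct an $f_{\omega_{d-1}}$-bad $D\colon[\bar a,R]^d\ra\bar c$ with $\bar a=f_{\omega_{d-1}}^{-1}(a)$ and $\bar c$ a standard integer depending only on $c$ and $d$. The template copies the proof of \textup{Theorem~\ref{pt}(2)}: stratify $[\bar a,R]$ by $f_{\omega_{d-1}}$-value; across strata, apply $C$ to $f_{\omega_{d-1}}$-images; on each stratum $\{x:f_{\omega_{d-1}}(x)=m\}$ splice in an $m$-bad internal coloring obtained by iterating \textup{Theorem~\ref{lb}} $d-2$ times starting from a $2$-dimensional rainbow $m$-bad coloring; auxiliary parity colorings $g_0,g_1$ pin any $D$-weakly homogeneous set either inside one stratum or along a strictly increasing trajectory of strata.

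The main obstacle is keeping $\bar c$ standard. In \textup{Theorem~\ref{pt}(2)} the constant $n$ made $5^{d-2}(n+1)$ a fixed integer; here the analogue $F^{-1}_{\omega_{d-1}}(x)$ is unbounded across $x$, so a naive construction would need unboundedly many colors. I plan to bypass this by starting the \textup{Theorem~\ref{lb}} recursion at a 2-dimensional bad coloring with a fixed small color count $c_0$ (e.g.\ $c_0=1$); after $d-2$ iterations the number of colors depends only on $d$, not on $m$ or on $F^{-1}_{\omega_{d-1}}(x)$. Each stratum has width at least $T_d(m)$ (since $f_{\omega_{d-1}}(x)=m$ forces $\log^{(d-2)}x\geq m$), leaving ample room to embed the internal coloring. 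Verifying that the resulting $D$ is $\ld^0_1$-definable in $\RCA^\ast$ and that the gluing of internal colorings across strata is consistent with the action of $g_0,g_1$ is bookkeeping, but it is the delicate heart of the argument.
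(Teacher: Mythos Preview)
Your Part~1 is essentially the paper's argument: both combine the upper bound of Theorem~\ref{ub} with the provable totality of $F_\alpha$ for $\alpha<\omega_{d-1}$. The paper sets $N=\max\{a,F_\alpha(kc)\}$ and splits into the cases $i<F_\alpha(kc)$ and $i\geq F_\alpha(kc)$ rather than shifting the left endpoint, but the content is the same.

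Your Part~2 has a genuine gap, and it is precisely the obstacle you name and then misdiagnose. To keep $\bar c$ standard you propose an internal $m$-bad coloring on $[0,T_d(m)-1]$ using $O(5^{d-2})$ colors, noting that each stratum $\{x:f_{\omega_{d-1}}(x)=m\}$ has width \emph{at least} $T_d(m)$. But ``at least'' is the wrong direction. For the template of Theorem~\ref{pt}(2) to work, the internal coloring must \emph{cover} the stratum; here the stratum can be vastly larger than $T_d(m)$, since $f_{\omega_{d-1}}(x)=m$ means $\log^{(d-2)}x\approx m^{F^{-1}_{\omega_{d-1}}(x)}$ and $F^{-1}_{\omega_{d-1}}$ is unbounded. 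On the uncovered portion of a stratum your $D$ is either undefined or, if padded arbitrarily, admits long weakly homogeneous sets. Tiling the stratum with copies of the internal coloring does not help without extra colors to separate the tiles, which reintroduces the unbounded color count you were trying to avoid.

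The paper sidesteps this by a model-theoretic argument rather than a direct $\RCA^\ast$-construction. One works in a model $M\models\RCA^\ast+\mathsf{I}\ls^0_{d-1}$ in which $F_{\omega_{d-1}}$ is not total; then $F^{-1}_{\omega_{d-1}}$ is \emph{bounded} in $M$, say by some (nonstandard) $n$. Inside $M$ one therefore has $f_{\omega_{d-1}}(x)\geq f_n(x)$ for all $x$, so $\WPH^{d,f_{\omega_{d-1}}}$ would imply $\WPH^{d,f_n}$. The construction of Theorem~\ref{pt}(2) goes through for this nonstandard $n$ because $\mathsf{I}\ls^0_1$ supplies the tower function, yielding $\WPH^{d,\id}$ in $M$ --- a contradiction. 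The key point you are missing is that the unboundedness of $F^{-1}_{\omega_{d-1}}$, which blocks your direct approach, disappears in exactly the models relevant to unprovability.
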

	
	In the proof we use the fact that $\RCA^{\ast}+\mathsf{I}\ls^0_{d-1}$ proves the totality of $F_{\alpha}$ for each $\alpha<\omega_{d-1}$ but not for $F_{\omega_{d-1}}$ (cf.~\cite{H-P}).
	\begin{proof}[Proof for 1]
		Given $c$ and $a$, take $N=\max\set{a,F_{\alpha}(kc)}$ where $k$ is from \textup{Theorem~\ref{ub}}, the upper bound for $wr^d_c$.
		Put $R=wr^d_c(N)$, we show that
		\begin{equation*}
		i\leq R\Ra f_{\alpha}(i)\leq N,
		\end{equation*}
		which guarantees that every weakly homogeneous set $H$ for $C\colon[a,a+R]^d\ra c$ of size larger than $N$ has size larger than $f(\min H)$.
		
		If $i<F_{\alpha}(kc)$, then $f_{\alpha}(i)\leq i\leq F_{\alpha}(kc)\leq N$.
		
		If $F_{\alpha}(kc)\leq i\leq R$, then $f_{\alpha}(i)=\sqrt[F^{-1}_{\alpha}(i)]{\log^{(d-2)}(i)}\leq\sqrt[F^{-1}_{\alpha}(F_{\alpha}(kc))]{\log^{(d-2)}(R)}\leq\sqrt[kc]{N^{kc}}=N$.
		This completes the proof.
		\renewcommand{\qedsymbol}{\relax}
	\end{proof}
	\begin{proof}[Proof for 2]
		Take a model $M$ of $\RCA^{\ast}+\mathsf{I}\ls^0_{d-1}$ in which $F_{\omega_{d-1}}$ is not total.
		Since the totality of $F_{\omega_{d-1}}$ is equivalent to $\WPH^{d,\id}$ over $\RCA^{\ast}$ (cf.~\cite{F-P}), $M$ also fails to satisfy $\WPH^{d,\id}$.
		
		Note that, on the other hand, the inverse $F^{-1}_{\omega_{d-1}}$ is total in $M$.
		Then we see that $F^{-1}_{\omega_{d-1}}$ is \emph{bounded} in $M$; that is, there exists (nonstandard) $n$ such that $\forall y F^{-1}_{\omega_{d-1}}(y)\leq n$ in $M$:
		If not, then for all $n$ there exists $x>n$ and $y$ such that $F_{\omega_{d-1}}(x)=y$, thus $F_{\omega_{d-1}}$ is total in $M$, contradiction.
		
		Note, again, that the proof of \textup{Theorem~\ref{pt}.2} works fine for nonstandard $n$, in the presence of the tower function; hence in $\RCA^{\ast}+\mathsf{I}\ls^0_1$, $\exists n\WPH^{d,f_n}$ implies $\WPH^{d,\id}$, where $f_n$ is from \textup{Theorem~\ref{pt}.2}.
		
		Assume in $M$ that $\WPH^{d,f_{\omega_{d-1}}}$ holds and take $n$ such that $\forall y F^{-1}_{\omega_{d-1}}(y)\leq n$.
		Then $f_{\omega_{d-1}}(x)\geq\sqrt[n]{\log^{(d-2)}(x)}=f_n(x)$ for all $x$ in $M$, thus we have $\WPH^{d,f_n}$, and $\WPH^{d,\id}$, contradiction.
		Therefore $M$ does not satisfy $\WPH^{d,f_{\omega_{d-1}}}$.
		\renewcommand{\qedsymbol}{\origqedsymbol}
	\end{proof}
	
	\begin{bibdiv}
		\begin{biblist}
			\bib{C-L-W}{article}{
				author       = {Carlucci, Lorenzo},
				author       = {Lee, Gyesik},
				author       = {Weiermann, Andreas},
				issn         = {0097-3165},
				journal      = {Journal of Combinatorial Theory, Series A},
				keyword      = {Fast-growing hierarchies,Rapidly growing Ramsey functions,Independence results,Regressive Ramsey Theorem,CLASSIFICATION,COMBINATORICS},
				number       = {2},
				pages        = {558--585},
				title        = {Sharp thresholds for hypergraph regressive Ramsey numbers},
				volume       = {118},
				year         = {2011},
			}
			
			\bib{E-M}{article}{
				author={Erd\"os, Paul},
				author={Mills, George},
				title={Some Bounds for the Ramsey--Paris--Harrington Numbers},
				journal={Journal of Combinatorial Theory, Series A},
				volume={30},
				number={1},
				pages={53--70},
				year={1981}
			}
			
			\bib{Setal}{inproceedings}{
				author={Figueira, Diego}, 
				author={Figueira, Santiago}, 
				author={Schmitz, Sylvain}, 
				author={Schnoebelen, Philippe}, 
				booktitle={2011 IEEE 26th Annual Symposium on Logic in Computer Science}, 
				title={Ackermannian and Primitive-Recursive Bounds with Dickson's Lemma}, 
				year={2011}, 
				pages={269--278},
				month={June},
			}
			
			\bib{F}{article}{
				author = {Friedman, Harvey M.},
				title = {Adjacent Ramsey Theory},
				eprint = {https://u.osu.edu/friedman.8/foundational-adventures/downloadable-manuscripts/},
				year = {2010},
				status = {unpublished manuscript},
			}
			
			\bib{F-P}{article}{
				author = {Friedman, Harvey},
				author = {Pelupessy, Florian},
				journal = {Proceedings of the American Mathematical Society},
				pages = {853--860},
				title = {Independence of Ramsey theorem variants using $\varepsilon_0$},
				volume = {144},
				year = {2016},
			}
			
			\bib{G}{article}{
				author = {Gordan, Paul},
				journal = {Nachrichten von der Gesellschaft der Wissenschaften zu G\"ottingen, Mathematisch-Physikalische Klasse},
				pages = {240--242},
				title = {Neuer Beweis des Hilbertschen Satzes \"uber homogene Funktionen},
				volume = {1899},
				year = {1899},
			}
			
			\bib{G-W}{article}{
				author       = {Gordeev, Lev},
				author       = {Weiermann, Andreas},
				issn         = {1432-0665},
				journal      = {Archive for Mathematical Logic},
				keyword      = {Mathematical logic,Ordinal notations,Foundations of mathematics,Well-partial-orderings,Asymptotic combinatorics,Proof theory},
				number       = {1--2},
				pages        = {127--161},
				title        = {Phase transitions of iterated Higman-style well-partial-orderings},
				volume       = {51},
				year         = {2012},
			}
			
			\bib{graham}{book}{
				title={Ramsey Theory},
				author={Graham, Ronald L.},
				author={Rothschild, Bruce L.},
				author={Spencer, Joel H.},
				series={Wiley-Interscience Series in Discrete Mathematics and Optimization},
				year={1990},
				publisher={John Wiley and Sons},
				edition={second edition}
			}
			
			\bib{H-P}{book}{
				author = {H\'ajek, Petr},
				author = {Pudl\'ak, Pavel},
				publisher={Springer-Verlag},
				series={Perspectives in Mathematical Logic},
				title = {Metamathematics of First-Order Arithmetic},
				year = {1993}
			}
			
			\bib{K-S}{article}{
				author={Ketonen, Jussi},
				author={Solovay, Robert},
				journal={Annals of Mathematics},
				number={2},
				pages={267--314},
				title={Rapidly Growing Ramsey Functions},
				volume={113},
				year={1981}
			}
			
			\bib{K-Y}{article}{
				author = {Kreuzer, Alexander P.},
				author = {Yokoyama, Keita},
				title = {On principles between $\Sigma_1$- and $\Sigma_2$-induction, and monotone enumerations},
				journal = {Journal of Mathematical Logic},
				volume = {16},
				number = {1},
				pages = {1650004},
				year = {2016},
			}
			
			\bib{L-W}{article}{
				author={L\"{o}b, Martin Hugo},
				author={Wainer, Stanley Scott},
				year={1970},
				issn={0003-9268},
				journal={Archiv f\"{u}r mathematische Logik und Grundlagenforschung},
				volume={13},
				number={1--2},
				title={Hierarchies of number-theoretic functions. I},
				publisher={Springer-Verlag},
				pages={39--51},
			}
			
			\bib{S}{article}{
				author={Simpson, Stephen George},
				ISSN = {00224812},
				journal = {The Journal of Symbolic Logic},
				number = {3},
				pages = {961--974},
				publisher = {Association for Symbolic Logic},
				title = {Ordinal Numbers and the Hilbert Basis Theorem},
				volume = {53},
				year = {1988}
			}
			
			\bib{sosoa}{book}{
				author={Simpson, Stephen George},
				title={Subsystems of Second Order Arithmetic},
				year={2009},
				publisher={Cambridge University Press},
				edition={second edition},
				series={Perspectives in Logic}
			}
			
			\bib{S-S}{article}{
				author={Simpson, Stephen G.},
				author={Smith, Rick L.},
				journal={Annals of Pure and Applied Logic},
				pages={289--306},
				title={Factorization of polynomials and $\Sigma^0_1$ induction},
				volume={31},
				year={1986}
			}
			
			\bib{S-Y}{article}{
				author = {Steila, Silvia},
				author = {Yokoyama, Keita},
				title     = {Reverse mathematical bounds for the Termination Theorem},
				journal   = {Annals of Pure and Applied Logic},
				volume    = {167},
				number    = {12},
				pages     = {1213--1241},
				year      = {2016},
			}
			
			\bib{W-H}{article}{
				author={Weiermann, Andreas},
				author={Van Hoof, Wim},
				issn         = {0002-9939},
				journal      = {Proceedings of the American Mathematical Society},
				number       = {8},
				pages        = {2913--2927},
				title        = {Sharp phase transition thresholds for the Paris Harrington Ramsey numbers for a fixed dimension},
				volume       = {140},
				year         = {2012},
			}
			
		\end{biblist}
	\end{bibdiv}
\end{document}